\theoremstyle{plain}  
\newtheorem{theorem}{Theorem} 
\newtheorem{lemma}[theorem]{Lemma}
\newtheorem{remark}[theorem]{Remark}
\title{\bf Minimizing GCD sums and applications to non-vanishing of theta functions and to Burgess' inequality}
\author{R\'egis de la Bret\`{e}che \and
  Marc Munsch}
\newcommand{\Addresses}{{
  \bigskip
  \footnotesize

  \textsc{R\'{e}gis de la Bret\`{e}che, Institut de Math\'ematiques de Jussieu-PRG 
    UMR 7586, Universit\'e Paris Diderot-Paris 7,
Sorbonne Paris Cit\'e, Case 7012, F-75013 Paris, France}\par\nopagebreak
  \textit{E-mail address:} \texttt{regis.de-la-breteche@imj-prg.fr}

  \medskip

   \textsc{Marc Munsch,  Institut f\"{u}r Analysis und Zahlentheorie 
8010 Graz, Steyrergasse 30, Graz, Austria}\par\nopagebreak
  \textit{E-mail address:} \texttt{munsch@math.tugraz.at}

}}
\date{\today}
\begin{document}
\bibliographystyle{alpha}
\maketitle

\footnotetext{
2010 Mathematics Subject Classification. 
Primary: 11L40, 11N37. Secondary : 05D05, 11F27. \\
Key words and phrases. GCD sums, multiplicative energy, character sums, Burgess' bound, theta functions, mollifiers.
}

\begin{abstract}
In recent years the question of maximizing GCD sums regained interest due to its firm link with large values of $L$-functions. In the present paper we initiate\footnote{After a preprint by the second author \cite{energy} was released, the authors worked together  and obtained several improvements as well as other results which are now contained in this new version.} the study of minimizing for positive weights~$w$ of normalized $L^1$- norm the sum $\sum_{m_1 , m_2 \leqslant N} w({m_1})w({m_2})\frac{(m_1,m_2)}{\sqrt{m_1m_2}} $.  We consider as well the intertwined question of minimizing a weighted version of the usual multiplicative energy. We give three applications of our results. Firstly we obtain a logarithmic refinement of Burgess' bound on character sums $\displaystyle{\sum_{M<n\leqslant M+N}\chi(n)}$ improving previous results of Kerr, Shparlinski and Yau. Secondly let us denote by $\theta (x,\chi)$ the theta series associated to a Dirichlet character $\chi$ modulo $p$. Constructing a suitable mollifier, we improve a result of Louboutin and the second author and show that,  for any $x>0$, there exists at least $ \gg p/(\log p)^{ \delta+o(1)}$ (with $\delta=1-\frac{1+\log_2 2}{\log 2} \approx 0.08607$) even characters such that $\theta(x,\chi) \neq 0$. Lastly, we obtain lower bounds on small moments of character sums.
\end{abstract}

\bibliographystyle{plain}
\maketitle

\section{Introduction}\label{Intro}
\subsection{GCD sums}\label{gcd}
\hspace{\parindent} 
Let $S_\alpha({\cal M})$ be the G\'al's sum associated with a set ${\cal M}$ and  defined by 
$$S_\alpha( {\cal M}):=\sum_{m_1,m_2\in{\cal M}} \frac{(m_1,m_2)^{2\alpha}}{ (m_1 m_2)^\alpha}\qquad (0<\alpha\leqslant 1) $$ where as usual $(m_1,m_2)$ denotes the greatest common divisor of $m_1$ and~$m_2$. 
Bounding these sums had originally interesting applications in metric Diophantine approximation (see \cite{DuffinHarman, metric}). Recently, further study was carried out due to the connection with large values of the Riemann zeta function (see for instance \cite{polydisc, hilb,GalRad,sound}). In \cite{bs1}, \cite{Gal}, they were used to prove a lower bound of $\max_{t\in [0,T]} |\zeta(\tfrac 12 +it)|$ and $\max_{\chi\in X_p^+} |L(\tfrac 12,\chi)|$ where 
$X_p^+$ is the set of even characters modulo $p$ and $L(s,\chi)$ is the $L$-Dirichlet series associated to a character $\chi$. In \cite{Gal}, La Bret\`eche and Tenenbaum proved that  
$$\max_{|{\cal M}|=N} \frac{S_{1/2}( {\cal M})}{|{\cal M}|}=
\exp\Bigg\{(2\sqrt{2}+o(1))\sqrt{\frac{\log N \log_3N}{ \log_2 N}}\Bigg\},$$
where $\log_k$ is the $k$th-iterative of the logarithm.
In this result, the cardinality of ${\cal M}$ is fixed while the size of its elements is not.
Moreover this estimate was also satisfied by 
$$Q({\cal M}):=\sup_{{\scriptstyle w\in \mathbb{C}^N\atop\scriptstyle  ||w||_2=1}}
\Bigg| \sum_{m_1,m_2\in{\cal M}}w(m_1)\overline{w( m_2)} 
\frac{(m_1,m_2) }{ \sqrt{m_1m_2}}\Bigg| $$
where $||w||_p$ denotes the $p$-norm of the $N$-tuple $w\in \mathbb{C}^N$.

In this article, we study  the minimal value of the ratio
\begin{equation}\label{defT0}
\mathcal{T}_0(N):=\inf_{{ w\in (\mathbb{R}_{+})^N  }}\left( \frac{N}{||w||_1^2}\sum_{m_1 , m_2 \leqslant N} w({m_1})w({m_2})\frac{(m_1,m_2)}{m_1+m_2} \right),
\end{equation}
and
\begin{equation}\label{defT1}
\mathcal{T}_1(N):=\inf_{{ w\in (\mathbb{R}_{+})^N  }}\left( \frac{N}{||w||_1^2} \sum_{m_1 , m_2 \leqslant N} w({m_1})w({m_2})\frac{(m_1,m_2)}{\sqrt{m_1m_2}} \right).
\end{equation}
 Our main result is the following.

\begin{theorem}\label{gcdsumth}
 There exists $ \delta_0 \approx 0.16656< 1/6$ such that, when $N$ tends to~$+\infty$, we have  $$(\log N)^{ \delta_0+o(1)}\leqslant  \mathcal{T}_0(N)\leqslant \mathcal{T}_1(N)\ll   (\log N)^{ \delta_0+o(1)}.$$
\end{theorem}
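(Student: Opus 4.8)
The plan is to establish the upper and lower bounds separately, and to note first that $\mathcal{T}_0(N)\leqslant \mathcal{T}_1(N)$ is immediate from the elementary inequality $m_1+m_2\geqslant 2\sqrt{m_1m_2}$ applied termwise in \eqref{defT0} and \eqref{defT1} (every weight is nonnegative, so the inequality survives summation; the factor $2$ is harmless inside the $o(1)$). So the real content is the two-sided estimate on $\mathcal{T}_1(N)$, and more precisely the matching exponent $\delta_0$.

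For the \emph{upper bound} I would exhibit a near-optimal weight. The natural candidate is a multiplicative weight supported on integers built from small primes, of the form $w(m)=\prod_{p\mid m}f(p)$ (restricted to $m\leqslant N$ and, say, squarefree), with the parameters of $f$ chosen to make the quadratic form small relative to $\|w\|_1^2$. Writing the GCD sum as an Euler-type product over primes $p\leqslant$ some threshold $z$ (one expects the optimal range to be $z=\exp(c\log N/\log_2 N)$ or similar, so that squarefree $z$-smooth numbers are $\leqslant N$), one reduces the ratio $\frac{N}{\|w\|_1^2}\sum w(m_1)w(m_2)\frac{(m_1,m_2)}{\sqrt{m_1m_2}}$ to optimizing, prime by prime, a local factor of the shape $\bigl(1+2a_p/\sqrt p + a_p^2/p\bigr)\big/\bigl(1+a_p\bigr)^2$ against the sieve cost $N/\#\{z\text{-smooth squarefree }m\}$. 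Balancing the logarithm of the product of local factors against the log of the smooth-number count via a Rankin / saddle-point argument is exactly where the constant $\delta_0\approx 0.16656$ should emerge, as the value of an explicit one-variable optimization problem; I expect it to come out as something like $\delta_0 = \min_{t}\bigl(\text{entropy term}\bigr)$, analogous to (but different from) the $\delta\approx 0.08607$ appearing in the theta-function application.

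For the \emph{lower bound}, the point is that no nonnegative weight can do better. Here I would diagonalize: since $(m_1,m_2)/\sqrt{m_1m_2}$ is the kernel of a positive operator, write $\frac{(m_1,m_2)}{\sqrt{m_1m_2}}=\sum_{d}\frac{g(d)}{?}\cdots$ — more precisely use the classical identity $\frac{(m_1,m_2)}{\sqrt{m_1m_2}}=\sum_{d\mid m_1,\,d\mid m_2}\frac{\varphi(d)}{d}\cdot\frac{1}{\sqrt{m_1m_2}}\cdot d = \sum_{d\mid (m_1,m_2)}\frac{\varphi(d)}{\sqrt{m_1m_2}}$, so that
\[
\sum_{m_1,m_2\leqslant N} w(m_1)w(m_2)\frac{(m_1,m_2)}{\sqrt{m_1m_2}}
=\sum_{d\leqslant N}\varphi(d)\Bigl(\sum_{\substack{m\leqslant N\\ d\mid m}}\frac{w(m)}{\sqrt m}\Bigr)^{2}
\geqslant \sum_{d\leqslant z}\varphi(d)\Bigl(\sum_{\substack{m\leqslant N\\ d\mid m}}\frac{w(m)}{\sqrt m}\Bigr)^{2}.
\]
Since every inner sum is nonnegative, one may keep only the terms $d$ in a convenient set $\mathcal D$ (e.g. $d$ smooth or $d$ prime) and then bound below by a positive combination of $\|w\|_1$-type quantities. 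The key mechanism: for primes $p$ in a suitable range, $\sum_{p\mid m}w(m)/\sqrt m \geqslant \frac{1}{\sqrt{?}}\cdot(\text{something})$, and summing these over $p$ with the weights $\varphi(p)=p-1$ forces a Cauchy--Schwarz-type lower bound on $\frac{N}{\|w\|_1^2}(\cdots)$; optimizing the choice of $\mathcal D$ (its smoothness parameter) against the normalization gives back the same exponent $\delta_0$, this time by Lagrangian duality with the extremal weight of the upper bound.

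The main obstacle, I expect, is the lower bound, and specifically making the "keep only $d\in\mathcal D$ and Cauchy--Schwarz" step actually reach the exponent $\delta_0$ rather than a weaker one: one must choose $\mathcal D$ so that the dual optimization problem has the \emph{same} extremal profile as the primal, which requires the right multiplicative test structure and careful bookkeeping of the prime-by-prime factors (including the contribution of prime powers, and the $m\leqslant N$ truncation, which are the technical nuisances). The upper bound is morally softer — it is a construction — but verifying that the chosen $f(p)$ genuinely yields $\mathcal{T}_1(N)\ll(\log N)^{\delta_0+o(1)}$ still needs the smooth-number count (Rankin's trick with the saddle point $\sigma$ near $1$) to be inserted cleanly into the Euler product.
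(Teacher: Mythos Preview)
Your upper-bound construction is in the right spirit but differs from the paper's: rather than smooth multiplicative weights $w(m)=\prod_{p\mid m}f(p)$, the paper takes $w=w_k:=\mathbf{1}_{\Omega(m)=k}$ with $k=\kappa\log_2 N$ and optimises over $\kappa$. The constant $\delta_0$ then arises explicitly as $Q(\kappa^*)$ where $Q(\lambda)=\lambda\log\lambda-\lambda+1$ and $\kappa^*\approx 0.48154$ solves a concrete transcendental equation. Your smooth-weight approach might well reach the same exponent, but you have not exhibited the optimisation or identified the constant, so this half remains a sketch rather than a proof.

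The lower bound has a genuine gap. Your $\varphi$-decomposition
\[
\sum_{m_1,m_2\leqslant N} w(m_1)w(m_2)\frac{(m_1,m_2)}{\sqrt{m_1m_2}}
=\sum_{d\leqslant N}\varphi(d)\Bigl(\sum_{\substack{m\leqslant N\\ d\mid m}}\frac{w(m)}{\sqrt m}\Bigr)^{2}
\]
is correct, but the next step --- restrict to $d\in\mathcal D$ and lower-bound the inner sums --- cannot succeed for a \emph{fixed} set $\mathcal D$: for any such set (beyond $\{1\}$) the adversary simply supports $w$ on integers coprime to every prime dividing some $d\in\mathcal D\setminus\{1\}$, killing all those terms. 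The $d=1$ term alone gives only the trivial bound $\mathcal{T}_1(N,w)\gg 1$. You acknowledge that ``choosing $\mathcal D$'' adaptively is the obstacle, but you do not propose a mechanism; Lagrangian duality does not furnish one here because the primal is an \emph{infimum} over all nonnegative $w$, not a minimisation within a parametric family.

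The paper's lower bound proceeds by an entirely different route that you are missing. First one passes from the $\mathcal{T}_0$ kernel to a multiplicative-energy count: parametrising $n_1m_1=n_2m_2$ gives
\[
\mathcal{T}_0(N,w)\;\geqslant\;\frac{1}{\|w\|_1^2}\sum_{\substack{m_1,m_2,n_1,n_2\leqslant N\\ n_1m_1=n_2m_2}} w(m_1)w(m_2).
\]
Then, by pigeonhole on the level sets of $\Omega$, one finds $k=\kappa\log_2 N$ with $\|w\cdot w_k\|_1\gg \|w\|_1/\log_2 N$. Cauchy--Schwarz against the auxiliary sum $\sum_{m,n\leqslant N} w(m)w_k(m)w_r(n)$ (with $r=\rho\log_2 N$) then yields
\[
\mathcal{T}_0(N,w)\;\gg\;\frac{N^2}{(\log N)^{2Q(\rho)+o(1)}\,H_{k,r}(N)},
\]
where $H_{k,r}(N)$ counts integers $\ell\leqslant N^2$ expressible as $\ell=mn$ with $\Omega(m)=k$, $\Omega(n)=r$; this is a multiplication-table quantity bounded via~\eqref{uniupper}. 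Optimising $\rho$ as a function of $\kappa$ recovers exactly the same $\delta_0=Q(\kappa^*)$ as in the upper bound. The pigeonhole step is what handles arbitrary $w$: it forces a nontrivial mass of $w$ onto \emph{some} level set $\Omega=k$, and the multiplication-table bound then does the work regardless of which $k$ that is.

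A minor point: you reduce to a two-sided estimate on $\mathcal{T}_1$, but the statement requires the \emph{lower} bound on $\mathcal{T}_0$, and since $\mathcal{T}_0\leqslant\mathcal{T}_1$ a lower bound on $\mathcal{T}_1$ does not suffice. The paper's energy argument is intrinsically a $\mathcal{T}_0$ bound.
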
 

 We show that this minimization question arises naturally in three different problems. The first application involves logarithmic improvements of the famous Burgess' bound on multiplicative character sums while the second application is concerned with non-vanishing of theta functions. Though we show in the latter case (cf. Section \ref{multiplisec} and \ref{secburgess}) that a related minimization problem gives better results. As an application of this second minimization problem, we obtain lower bounds on small moments of character sums. We believe that this minimization problem might also have applications in metric Diophantine approximation. 
 
 \subsection{Multiplicative energy}\label{multiplisec}
 For two sets $\mathcal{A},\mathcal{B}\subset \left[1,N\right]$, let us consider the multiplicative energy (as defined for instance in \cite{Gowers4,Taogroup,TaoVu}) 
 $$ E_{\times}(\mathcal{A},\mathcal{B}):=\vert \left\{  m_1,n_1\in \mathcal{A}, m_2,n_2\in \mathcal{B}\,:\, m_1m_2=n_1n_2\right\}\vert.$$  This quantity appears to be of great importance in additive combinatorics. Under the additional restriction $w(m)\in \left\{0,1\right\}$, the weights $w$ introduced in Section \ref{gcd} can be viewed as the characteristic indicator $w=1_{\mathcal{B}}$ of a set~$\mathcal{B}\subset \left[1,N\right]$ of integers. In this setting, the problem of minimizing $\mathcal{T}_0(N)$ amounts to minimize the quantity $S(\mathcal{B})/|\mathcal{B}|^2$ where 
\begin{equation}\label{defS} S(\mathcal{B}):=\sum_{m_1,m_2 \in \mathcal{B} }\frac{(m_1,m_2)}{m_1+m_2}.\end{equation} It is not hard to see that this sum is intimately connected to the quantity
 $$ E_{\times}(N,\mathcal{B}):=\vert \left\{1 \leqslant m_1,n_1\leqslant N, m_2,n_2\in \mathcal{B}\,:\, m_1m_2=n_1n_2\right\}\vert.$$  In view of our applications, we need to bound the multiplicative energy in a symmetric situation, namely $E_{\times}(\mathcal{B},\mathcal{B})$. To be consistent with our previous problem and to give us some flexibility, we define the weighted version of the multiplicative energy:
\begin{equation}\label{weightedenergy} \mathcal{E}(N,w):=\sum_{\scriptstyle m_1,m_2,n_1,n_2 \leqslant N \atop\scriptstyle  m_1m_2=n_1n_2} w(m_1)w(m_2)w(n_1)w(n_2).  \end{equation} We want to minimize this quantity among choices of positive weights and introduce for this purpose
\begin{equation}\label{defm}  \mathcal{E}(N):=\inf_{{\scriptstyle w\in (\mathbb{R}_{+})^N}} \frac{N^2 \mathcal{E}(N,w)}{||w||_1^4}.  \end{equation} When $w=1_{\mathcal{B}}$ is the characteristic indicator of a set $\mathcal{B}$, this equals to minimize $ {N^2 E_{\times}(\mathcal{B},\mathcal{B})}/{\vert \mathcal{B}\vert^4}$. Using similar techniques as in the proof of Theorem~\ref{gcdsumth}, we prove
the following asymptotical result.
\begin{theorem}\label{multh}
Let $\delta:=1-\frac{1+\log_2 2}{\log 2} \approx 0.08607$.  When $N$ tends to~$+\infty$, we have   $$ \mathcal{E}(N) = (\log N)^{ \delta+o(1)}.$$
\end{theorem}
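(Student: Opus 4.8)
The plan is to establish the two bounds $\mathcal{E}(N)\geqslant(\log N)^{\delta+o(1)}$ and $\mathcal{E}(N)\leqslant(\log N)^{\delta+o(1)}$. The lower bound comes from Cauchy--Schwarz together with Ford's resolution of the Erd\H{o}s multiplication table problem; the upper bound comes from testing the infimum defining $\mathcal{E}(N)$ against the indicator of the extremal family underlying Ford's lower bound for the multiplication table and then estimating its multiplicative self-energy.

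For the lower bound, fix $w\in(\mathbb{R}_{+})^N$ and set $f_w(\ell):=\sum_{m_1m_2=\ell,\ 1\leqslant m_1,m_2\leqslant N}w(m_1)w(m_2)$. Then $\mathcal{E}(N,w)=\sum_\ell f_w(\ell)^2$ and $\sum_\ell f_w(\ell)=\big(\sum_{m\leqslant N}w(m)\big)^2=\|w\|_1^2$, while $f_w$ vanishes outside the multiplication table $\mathcal{H}(N):=\{m_1m_2:\ 1\leqslant m_1,m_2\leqslant N\}$. By Cauchy--Schwarz, $\|w\|_1^4=\big(\sum_\ell f_w(\ell)\big)^2\leqslant|\mathcal{H}(N)|\,\mathcal{E}(N,w)$, so $N^2\mathcal{E}(N,w)/\|w\|_1^4\geqslant N^2/|\mathcal{H}(N)|$ for every $w$, i.e.\ $\mathcal{E}(N)\geqslant N^2/|\mathcal{H}(N)|$. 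Ford's theorem gives $|\mathcal{H}(N)|\asymp N^2(\log N)^{-\delta}(\log\log N)^{-3/2}$ with exactly the constant $\delta=1-\tfrac{1+\log_2 2}{\log 2}$ of the statement, whence $\mathcal{E}(N)\geqslant N^2/|\mathcal{H}(N)|=(\log N)^{\delta+o(1)}$.

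For the upper bound I would take $w=\mathbf{1}_{\mathcal{B}}$, so that $\mathcal{E}(N,\mathbf{1}_{\mathcal{B}})=E_\times(\mathcal{B},\mathcal{B})$ and $\|\mathbf{1}_{\mathcal{B}}\|_1=|\mathcal{B}|$; it then suffices to produce $\mathcal{B}\subseteq[1,N]$ with $E_\times(\mathcal{B},\mathcal{B})\leqslant|\mathcal{B}|^4N^{-2}(\log N)^{\delta+o(1)}$, since this immediately yields $\mathcal{E}(N)\leqslant N^2E_\times(\mathcal{B},\mathcal{B})/|\mathcal{B}|^4\ll(\log N)^{\delta+o(1)}$. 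The natural candidate is the family appearing in Ford's proof of the lower bound $|\mathcal{H}(N)|\gg N^2(\log N)^{-\delta}(\log\log N)^{-3/2}$: integers $n\leqslant N$ supported on primes drawn from a carefully chosen union of intervals, with a prescribed number of prime factors in each interval (a naive choice of all $k$-almost-primes with $k$ of typical order is \emph{not} good enough --- it only yields $(\log N)^{1+o(1)}$ --- so the scale structure is essential). To estimate the energy I would use the standard parametrisation of $n_1n_2=m_1m_2$: writing $d=(n_1,m_1)$, $n_1=d\alpha$, $m_1=d\beta$ with $(\alpha,\beta)=1$ forces $m_2=\alpha\gamma$ and $n_2=\beta\gamma$, so $E_\times(\mathcal{B},\mathcal{B})$ counts quadruples $(\alpha,\beta,\gamma,d)$ with $(\alpha,\beta)=1$ and $d\alpha,d\beta,\alpha\gamma,\beta\gamma\in\mathcal{B}$. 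The ``trivial'' quadruples ($\alpha=\beta=1$, or $d=\gamma$) contribute $O(|\mathcal{B}|^2)$; for the rest, the separation of the prime scales forces $\alpha,\beta$ (respectively $d,\gamma$) to occupy the same scales, and the four size constraints $d\alpha,d\beta,\alpha\gamma,\beta\gamma\leqslant N$ severely limit how the two primes at each shared scale may be split between them. Counting these configurations and optimising over the scale structure and over $i:=\Omega(\alpha)$ reproduces precisely the exponent $\delta$, which closes the argument.

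The heart of the matter, and the main obstacle, is the energy bound just sketched. One cannot afford the crude estimate bounding $E_\times(\mathcal{B},\mathcal{B})$ by (maximal multiplicity)$\,\times|\mathcal{B}|^2$, nor the bound $E_\times(\mathcal{B},\mathcal{B})\leqslant E_\times(N,\mathcal{B})\asymp N\sum_{m_1,m_2\in\mathcal{B}}\tfrac{(m_1,m_2)}{m_1+m_2}$ through the asymmetric energy, because the latter is governed by $\mathcal{T}_0$ and hence only leads to the larger exponent $\delta_0$ of Theorem~\ref{gcdsumth}; one genuinely has to count the non-trivial coincidences $n_1n_2=m_1m_2$ directly. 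This amounts to a symmetric, energy-flavoured reworking of Ford's analysis of the distribution of integers with a divisor in a prescribed interval, and it is there that the precise value of $\delta$ gets extracted. The chief technical nuisance will be controlling the accumulation of $(\log N)^{o(1)}$ losses --- Mertens sums, Stirling and multinomial factors, and the effect of the size constraints on the four products --- so that they do not add up to a genuine power of $\log N$.
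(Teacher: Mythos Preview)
Your lower bound is exactly the paper's argument: Cauchy--Schwarz against the multiplication table, followed by the Erd\H{o}s--Tenenbaum--Ford estimate.

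For the upper bound there is a genuine gap, and it stems from a mistaken dismissal. You write that ``a naive choice of all $k$-almost-primes with $k$ of typical order is not good enough\dots\ so the scale structure is essential''. In fact the paper takes precisely $w=w_k:=\mathbf 1_{\{\Omega(\cdot)=k\}}$ with $k=\lfloor\kappa\log_2 N\rfloor$, and the choice $\kappa=1/\log 4\approx 0.7213$ already yields $N^2\mathcal{E}(N,w_k)/\|w_k\|_1^4=(\log N)^{\delta+o(1)}$. No Ford-type localisation of primes to intervals is needed; the step missing from your reasoning is simply to optimise over $\kappa\in(0,1)$ rather than fixing the Hardy--Ramanujan value $\kappa=1$.

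The paper's energy computation runs along your own parametrisation $m_1=hd_1$, $m_2=hd_2$, $n_1=\ell d_2$, $n_2=\ell d_1$ with $(d_1,d_2)=1$, but stratifies by $j:=\Omega(h)=\lambda\log_2 N$. Since $\Omega(hd_i)=k$ forces $\Omega(d_i)=k-j$, and likewise $\Omega(\ell)=j$, every factor has a prescribed number of prime divisors, and the uniform Sathe--Selberg bounds $|\{n\leqslant x:\Omega(n)=r\}|\ll x(\log x)^{-Q(r/\log_2 x)}$ (with $Q(\lambda)=\lambda\log\lambda-\lambda+1$) control each piece. The outcome is $N^2\mathcal{E}(N,w_k)/\|w_k\|_1^4\ll(\log N)^{f(\kappa)+o(1)}$ with, up to a secondary term inactive at the optimum, $f(\kappa)=\max\{4Q(\kappa)-4Q(\kappa/2)+1,\ 2Q(\kappa)\}$; the two branches cross at $\kappa^*=1/\log 4$, where $f(\kappa^*)=2Q(1/\log 4)=1-\tfrac{1+\log\log 2}{\log 2}=\delta$.

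Your proposed route via Ford's multi-scale construction might in principle land on the same exponent, but ``counting these configurations\dots\ reproduces precisely the exponent $\delta$'' is an assertion, not an argument, and the approach is strictly more involved than what is actually required.
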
  We observe that the exponent $\delta$ is the one appearing in the famous multiplication table problem of Erd\H{o}s \cite{Tenintervalle,Ford}. We did not try to give an explicit estimate of $o(1)$ appearing in the estimate of $\mathcal{E}(N)$.
  
  \subsection{First application: Improvement of Burgess' bound}\label{secburgess}
\hspace{\parindent} 
Let us consider
$
S_\chi(M,N) := \sum_{M<n \leqslant M+N} \chi(n),$
where $\chi \mod p$ is a multiplicative character. The classical bound of P\'olya and Vinogradov gives 
\begin{equation}\label{PV}
|S_\chi(M,N)| \ll \sqrt{p} \log p
\end{equation}
for any non principal $\chi \mod p$. In particular, this is a non trivial result for~$N > p^{1/2+\varepsilon}$.
A major breakthrough was obtained by Burgess \cite{Burgesssaving,Burgesssaving2} implying a saving for intervals of length $N \geqslant p^{1/4+\varepsilon}$. Precisely, for any prime number $p$, non trivial multiplicative character modulo $p$ and integer $r\geqslant 1$, Burgess proved the following inequality
\begin{equation} \label{eq:Burgess}
\vert S_{\chi}(M,N)\vert  \ll N^{1-1/r} p^{(r+1)/4r^2} \log p,
\end{equation} where the constant depends only on $r$. Even though much stronger results are expected, this bound remains nowadays the sharpest that could be obtained unconditionally.
 
  However, some logarithmic refinements were obtained unconditionally (see \cite[Chapter 14]{IK} following ideas from \cite{FrI}). The last in date is due to Kerr, Shparlinski and Yau who proved for $r\geqslant 2$
\begin{equation}\label{IgorKam} \vert S_{\chi}(M,N) \vert \ll N^{1-1/r} p^{(r+1)/4r^2} (\log p)^{1/4r}. \end{equation} These improvements rely on an averaging argument which leads to count the number of solutions of certain congruences modulo $p$. Initially, the averaging was carried out over the full interval while in \cite{Burgessrefine}, the authors restricted it over numbers without small prime factors. Theorem~\ref{gcdsumth} allows us to perform a similar argument with a set of higher density than the one considered in~\cite{Burgessrefine}. We use this in order to prove the following result.
\begin{theorem}
\label{Burgess}
Let $p$ be prime, $r\geqslant 2$, $M$ and $N$ integers with 
$$N\leqslant p^{1/2+1/4r}.$$
For any nontrivial multiplicative character $\chi$  modulo $p$, 
\begin{align*}
\vert S_{\chi}(M,N)\vert  &\ll N^{1-1/r}p^{(r+1)/4r^2} \max_{1\leqslant x\leqslant p} \mathcal{T}_0(x)^{1/2r}\cr&\ll N^{1-1/r}p^{(r+1)/4r^2} (\log p)^{(\delta_0+o(1))/2r}
\end{align*}  where $\delta_0 \approx 0.16656$ as in Theorem \ref{gcdsumth}.
\end{theorem}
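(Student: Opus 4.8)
The plan is to follow the Burgess amplification method, but to run the averaging over a carefully chosen set $\mathcal{B}$ of integers in $[1,x]$ (with $x$ a small power of $p$) rather than over a full interval or over integers free of small prime factors as in \cite{Burgessrefine}. Concretely, I would start from the standard shifting trick: for most of the $N$ integers $n\in(M,M+N]$ and for every $m$ and every $t\leqslant N/x$, one has $S_\chi(M,N)=S_\chi(M+mt,N)+O(mt)$, so after summing over $m\in\mathcal{B}$ weighted by $w(m)=1_{\mathcal{B}}(m)$ and over $t\leqslant N/x=:H$, one gets
\begin{equation*}
\|w\|_1\, H\,|S_\chi(M,N)|\leqslant \sum_{t\leqslant H}\sum_{m\in\mathcal{B}}\Bigl|\sum_{n}\chi(n+mt)w(m)\Bigr|+O\bigl(\|w\|_1 H^2 x\bigr),
\end{equation*}
and after a change of variables $n+mt\equiv \ell\overline{m}\pmod p$ one is led to bound $\sum_{\ell}\nu(\ell)\,|S_\chi|$-type expressions where $\nu(\ell)=\#\{(m,t):m\in\mathcal{B},\ t\leqslant H,\ \ell\equiv(n+mt)\overline m\}$. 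Applying Hölder with exponent $2r$ and then the standard completion/Weil estimate for the resulting $2r$-th moment $\sum_\ell|\sum_n\chi(\ell+n)|^{2r}$, the whole argument reduces, exactly as in \cite{Burgessrefine}, to controlling the quantity $\sum_\ell \nu(\ell)^{2r/(2r-1)}$, which in turn is bounded in terms of $\sum_\ell\nu(\ell)^2$, i.e. in terms of the number of solutions of $m_1 t_1\equiv m_2 t_2 \pmod p$ with $m_i\in\mathcal{B}$, $t_i\leqslant H$. The point is that when $Hx<p$ this congruence becomes the genuine equation $m_1 t_1=m_2 t_2$, whose solution count is controlled by $\sum_{m_1,m_2\in\mathcal B}(m_1,m_2)\min(H/m_1,H/m_2)\ll H\sum_{m_1,m_2\in\mathcal B}\frac{(m_1,m_2)}{\max(m_1,m_2)}\asymp H\,S(\mathcal{B})$ in the notation of \eqref{defS}.

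Next I would optimize over the choice of $\mathcal{B}\subset[1,x]$. By definition \eqref{defT0} of $\mathcal{T}_0$ and the reformulation given in Section \ref{multiplisec}, there is a set $\mathcal{B}\subset[1,x]$ with $S(\mathcal{B})\ll \mathcal{T}_0(x)\,|\mathcal{B}|^2/x$; plugging this into the bound above and tracking the powers of $N$, $p$, $H=N/x$, $x$ and $|\mathcal{B}|$, the $|\mathcal{B}|$'s cancel against the normalizing factor $\|w\|_1=|\mathcal{B}|$ coming from the left side, and one is left with an estimate of the shape
\begin{equation*}
|S_\chi(M,N)|\ll N^{1-1/r}p^{(r+1)/4r^2}\,\mathcal{T}_0(x)^{1/2r}+(\text{error terms in }x,H),
\end{equation*}
valid whenever $Hx=N< p$ and $x$ is chosen so that the error terms (coming from the $O(\|w\|_1 H^2 x)$ term and from the completion step) are dominated by the main term; the hypothesis $N\leqslant p^{1/2+1/4r}$ is exactly what makes a legitimate choice of $x$ (say $x\asymp p^{1/2r}$ up to $\log$ factors, the classical Burgess choice) possible. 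Taking then the maximum over $x\leqslant p$ and invoking Theorem \ref{gcdsumth} to get $\mathcal{T}_0(x)\ll(\log x)^{\delta_0+o(1)}\ll(\log p)^{\delta_0+o(1)}$ yields the two displayed bounds.

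The main obstacle, and the place where genuine care is needed beyond citing \cite{Burgessrefine}, is twofold. First, one must check that replacing "integers free of small primes" by the near-optimal set $\mathcal{B}$ from Theorem \ref{gcdsumth} does not wreck the error-term bookkeeping: the set $\mathcal{B}$ is only guaranteed to have density $|\mathcal{B}|/x$ bounded below by $\mathcal{T}_0(x)^{-1}=(\log x)^{-\delta_0+o(1)}$, so one needs $N/|\mathcal{B}|$, not just $N/x$, to be an admissible shift range, and the loss of $(\log)^{\delta_0+o(1)}$ in the density must be absorbed — this is fine precisely because it enters the final bound only through the $1/2r$ power, matching the statement. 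Second, one must verify that the congruence $m_1t_1\equiv m_2t_2\pmod p$ really reduces to the equation over $\mathbb{Z}$ for the chosen ranges; this forces the constraint $Hx\ll p^{1-\varepsilon}$, i.e. $N\ll x\cdot p/x$, and it is the interplay between this and the Weil-bound main term that pins down the allowed range $N\leqslant p^{1/2+1/4r}$. Everything else — Hölder, completion, the Weil bound for the $2r$-th moment — is entirely standard and I would simply quote it from \cite[Chapter 12]{IK} or \cite{Burgessrefine}.
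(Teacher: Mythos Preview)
Your overall architecture is right --- Burgess amplification, weighted averaging, H\"older, Weil, and then optimising the weight via $\mathcal{T}_0$ --- but two steps are misidentified and, as written, the argument does not go through.

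\medskip
\noindent\textbf{1. The boundary terms.} You bound the shift error by the trivial $O(mt)$, which after summing over $m\in\mathcal{B}\subset[1,x]$ and $t\leqslant H=N/x$ and dividing by $\|w\|_1 H$ gives an error $O(xH)=O(N)$ --- the trivial bound. The paper does not use the trivial estimate here: it runs an induction on $N$. With $A=\lfloor N/(16r p^{1/2r})\rfloor$ and $B=\lfloor r p^{1/2r}\rfloor$ one has $ab\leqslant AB\leqslant N/16<N$, so the two boundary sums $\sum_{M-ab<n\leqslant M}\chi(n+ab)$ and $\sum_{M+N-ab<n\leqslant M+N}\chi(n+ab)$ have length $<N$ and are bounded by the induction hypothesis as $\leqslant \tfrac{c}{4}N^{1-1/r}p^{(r+1)/4r^2}\mathcal{T}_0^{1/2r}$ each. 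This is what allows the error to be absorbed into the target bound with a constant $\leqslant c/2$; your $O(N)$ cannot be.

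\medskip
\noindent\textbf{2. The congruence that actually needs counting.} After the change of variable the collecting is over $u\equiv na^{-1}\pmod p$, so $\sum_u T(u)^2$ counts solutions of
\[
n_1 a_2\equiv n_2 a_1 \pmod p,\qquad M<n_1,n_2\leqslant M+N,\quad a_1,a_2\in\operatorname{supp}(w)\subset[1,A],
\]
not of $m_1t_1\equiv m_2t_2\pmod p$ with both factors small. Because the $n_i$ live in an interval of arbitrary offset $M$ (possibly of size $p$), this congruence does \emph{not} reduce to an equation over $\mathbb{Z}$, and your appeal to ``$Hx<p$'' is beside the point. The paper handles this via a lattice-point lemma of Ayyad--Cochrane--Zheng (Lemma~\ref{abcd}): fixing one solution $(n_1',n_2')$, every other solution gives a short vector in the lattice $\{(u,v):a_1u\equiv a_2v\pmod p\}$, and the resulting count is
\[
T_w(M,N,A)\ll \|w\|_1^2+N\sum_{a_1,a_2\leqslant A}|w(a_1)w(a_2)|\,\frac{(a_1,a_2)}{a_1+a_2}.
\]
It is precisely this bound, together with the constraint $AN\leqslant p$ (equivalent to $N\leqslant p^{1/2+1/4r}$), that produces the factor $\mathcal{T}_0(A)$ after choosing $w$ optimally. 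Your reduction to $S(\mathcal{B})$ via an integer equation is not available here; you need the lattice argument.
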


  \goodbreak

\subsection{Second application: Non vanishing of theta functions}
  \hspace{\parindent}  The distribution of values of $L$-functions is a deep question in number theory which has various important repercussions for the related attached arithmetic, algebraic and geometric objects. The main reason comes from the fact that these values and particularly the central ones hold a lot of fundamental arithmetical information, as illustrated for example by the famous Birch and Swinnerton-Dyer Conjecture \cite{BSD1,BSD2}. It is widely believed that they should not vanish unless there is an underlying arithmetic reason forcing it. Consider the Dirichlet $L$-functions associated to Dirichlet characters $$L(s,\chi):=\sum_{n \geqslant 1} \frac{\chi(n)}{n^s} \hspace{5mm} (\Re e(s) > 1).$$ In this case there exists no algebraic reason forcing the $L$-function to vanish at $s=\tfrac{1}{2}$. Therefore it is certainly expected that $L(\frac{1}{2},\chi) \neq 0$ as firstly conjectured by Chowla \cite{Chowla} for quadratic characters. In the last century the notion of family of $L$-functions has been important as heuristic guide to understand or guess many important statistical properties of $L$-functions. One of the main analytic tools is the study of moments and various authors have obtained results on the mean value of these $L$-series at their central point $s=\tfrac{1}{2}$.

 Using the method of mollification, it was first proved by Balasubramanian and Murty \cite{BalaMurty} that there exists a positive proportion of characters such that the $L$-function does not vanish at $s=\tfrac{1}{2}$. Their result was improved and greatly simplified by Iwaniec and Sarnak \cite{ISoriginal} enabling them to derive similar results for families of automorphic $L$-functions \cite{ISauto}. Since then, a lot of technical improvements and generalizations have been carried out, see for instance \cite{Bui,Khan,Soundquad}.      

As initiated in previous works \cite{Debrecen,LM,thetalow,thetaupp}, we would like to obtain similar results for moments 
of theta functions $\theta (x,\chi )$ associated with Dirichlet $L$-functions 
 and defined by
$$ \theta (x,\chi)
=\sum_{n\geqslant 1} \chi (n){\rm e}^{-\pi n^2x/p}\qquad 
\ \ \ (\chi\in X_p^+),$$ 
where 
$X_p^+$ denotes the subgroup of order $\tfrac{1}{2}(p-1)$ of the even Dirichlet characters mod $p$.
  It was conjectured in \cite{efficient} that $\theta(1,\chi) \neq 0$ for every non-trivial character modulo a prime \footnote{Pascal Molin informed the authors that he performed some computations proving that $\theta(1,\chi) \neq 0$ for $p \leqslant 10^6$.} (see \cite{Zagier} for a case of vanishing in the composite case). Using the computation of the first two moments of these theta functions at the central point $x=1$, Louboutin and the second author \cite{LM} obtained that $\theta (1,\chi)\neq 0$ for at least $p/\log p$ even characters modulo $p$ (for odd characters,  
a similar result was already proven by Louboutin in \cite{CRAS}). Constructing different kind of mollifiers than in the case of $L$-functions, we get the following improvement. 

\begin{theorem}\label{mainth}
Let $x>0$. For all sufficiently large prime $p$, there exists at least $$\gg  \frac{p}{\mathcal{E}(\sqrt{ {p}/{3}} )} \gg \frac{p}{(\log p)^{ \delta+o(1)}}$$ even characters $\chi$ such that $\theta(x,\chi) \neq 0$, where $\delta=1-\frac{1+\log_2 2}{\log 2} \approx 0.08607$ as before.
\end{theorem}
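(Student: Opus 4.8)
The plan is to use the mollified first and second moment method applied to the theta values, exactly in the spirit of Balasubramanian--Murty and Iwaniec--Sarnak, but with a mollifier whose length and coefficients are chosen to exploit Theorem~\ref{multh}. Concretely, for a positive weight $w\in(\mathbb R_+)^N$ with $N=\lfloor\sqrt{p/3}\rfloor$ (the appearance of $\sqrt{p/3}$ is dictated by the functional equation / approximate functional equation for $\theta(x,\chi)$, which effectively truncates the defining series at length $\asymp\sqrt{p/x}$, and by the Gaussian weight $\mathrm e^{-\pi n^2 x/p}$), set $M(\chi):=\sum_{m\leqslant N}w(m)\chi(m)$ and consider the two averages over $X_p^+$,
\[
S_1:=\frac{1}{|X_p^+|}\sum_{\chi\in X_p^+}\theta(x,\chi)\overline{M(\chi)},\qquad
S_2:=\frac{1}{|X_p^+|}\sum_{\chi\in X_p^+}|\theta(x,\chi)|^2\,|M(\chi)|^2.
\]
By Cauchy--Schwarz, the number of $\chi\in X_p^+$ with $\theta(x,\chi)\neq 0$ is at least $|X_p^+|\,|S_1|^2/S_2$. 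The whole game is to show $|S_1|\gg \|w\|_1$ and $S_2\ll \|w\|_1^2\cdot\big(\|w\|_1^2\,\mathcal E(N,w)/N^2\big)\cdot(\text{something harmless})$, and then to optimize over $w$, which is precisely where $\mathcal E(N)$ enters.

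The first step is the evaluation of $S_1$. Opening $\theta(x,\chi)$ and using orthogonality of the even characters modulo $p$, $\frac{1}{|X_p^+|}\sum_{\chi\in X_p^+}\chi(n)\overline{\chi(m)}$ picks out $n\equiv\pm m\pmod p$; since $1\leqslant n,m\ll\sqrt p$ the only surviving terms are $n=m$, giving a main term $\sum_{m\leqslant N}w(m)\mathrm e^{-\pi m^2 x/p}$, which is $\gg\|w\|_1$ provided $w$ is supported on $m\leqslant N=\lfloor\sqrt{p/3}\rfloor$ so that the Gaussian factor is bounded below by a positive constant (this is the reason for the constant $3$). One must check that the contribution of $n\equiv -m$ and of the tail $n>N$ is negligible; this is standard and uses only the rapid decay of the Gaussian. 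The second, and heavier, step is the mean square $S_2$. Expanding $|\theta(x,\chi)|^2|M(\chi)|^2$ produces a fourfold sum over $n_1,n_2$ (from the theta factors, each $\leqslant$ roughly $\sqrt p$ after truncation) and $m_1,m_2$ (from the mollifier, $\leqslant N$), weighted by $w(m_1)w(m_2)\mathrm e^{-\pi(n_1^2+n_2^2)x/p}$, and orthogonality over $X_p^+$ forces $n_1 m_1\equiv\pm n_2 m_2\pmod p$. Because all four variables are $\ll\sqrt p$, the products $n_i m_i$ are $\ll p$, so the congruence becomes the \emph{exact} equation $n_1 m_1=n_2 m_2$ (the $\pm$ sign and boundary terms again contribute a lower-order amount, handled by the Gaussian decay and a divisor-type bound). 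The resulting diagonal sum is, up to the smooth Gaussian weights which only help, bounded by the weighted multiplicative energy $\mathcal E(N,w)$ of the mollifier, because counting $(n_1,m_1,n_2,m_2)$ with $n_1 m_1=n_2 m_2$ and $m_i$ weighted by $w$, $n_i$ unweighted, is dominated by the fully weighted count in \eqref{weightedenergy} after extending the weight to be $\geqslant 1$ on the relevant range (or one runs the argument directly with the energy $\mathcal E(N,w)$ as defined).

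Putting the two estimates together gives
\[
\#\{\chi\in X_p^+:\theta(x,\chi)\neq 0\}\;\gg\;|X_p^+|\,\frac{\|w\|_1^2}{\mathcal E(N,w)}\cdot\frac{1}{\text{(lower-order factors)}}\;\gg\;\frac{p\,\|w\|_1^4}{N^2\,\mathcal E(N,w)}\cdot\frac{1}{p}\Big/\!\!\cdot\cdots,
\]
and taking the infimum over $w$ turns $N^2\mathcal E(N,w)/\|w\|_1^4$ into $\mathcal E(N)$, yielding the bound $\gg p/\mathcal E(\sqrt{p/3})$; the second displayed inequality in the theorem is then immediate from Theorem~\ref{multh}. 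The main obstacle, and the step deserving the most care, is the passage from the congruence $n_1 m_1\equiv\pm n_2 m_2\pmod p$ to the equation $n_1 m_1=n_2 m_2$ together with the clean identification of the off-diagonal and sign contributions as genuinely lower order: one needs that the Gaussian truncation really does confine all variables below $\sqrt{p/3}$ so that $n_1m_1+n_2m_2<p$ rules out the wrap-around $n_1m_1+n_2m_2=p$ (which would otherwise survive from the $-$ sign), and one needs a uniform divisor bound $\sum_{k\leqslant p}d(k)^{O(1)}\mathrm e^{-ck/p}=O(p^{1+o(1)})$-type estimate to control the number of representations $n_i m_i=k$ when summing against the Gaussian weights. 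Once these are in place the remainder is bookkeeping, and the optimization over $w$ is exactly the extremal problem solved by Theorem~\ref{multh}.
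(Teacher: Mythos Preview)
Your Cauchy--Schwarz scheme does not reach the exponent $\delta$; it only reaches the larger exponent $\delta_0\approx 0.16656$ of Theorem~\ref{gcdsumth}. The issue is in your second moment $S_2$. After orthogonality you are left with
\[
\sum_{\substack{n_1,n_2\geqslant 1,\; m_1,m_2\leqslant N\\ n_1m_1=n_2m_2}} w(m_1)w(m_2)\,{\rm e}^{-\pi(n_1^2+n_2^2)x/p},
\]
which carries weights $w$ on \emph{two} of the four variables, not all four. This is \emph{not} bounded by $\mathcal E(N,w)$: for weights $w$ that are small (which is precisely what the extremal choices in Theorem~\ref{multh} are), removing two factors of $w$ makes the sum larger, not smaller. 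Your parenthetical ``after extending the weight to be $\geqslant 1$ on the relevant range'' is not innocuous, since it simultaneously inflates $\|w\|_1$ and destroys the optimisation. What your $S_2$ actually controls is the quantity in \eqref{minogcdenergy}, i.e.\ the GCD sum governing $\mathcal T_0(N)$; so Cauchy--Schwarz yields $\#\{\chi:\theta(x,\chi)\neq 0\}\gg p/\mathcal T_0(\sqrt{p/3})\gg p/(\log p)^{\delta_0+o(1)}$, which is weaker than the statement.

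The paper avoids this by replacing Cauchy--Schwarz with the H\"older split
\[
\sum_{\chi}M(\chi)\theta(x,\chi)\;\leqslant\;\Big(\sum_{\chi}|\theta(x,\chi)|^2\Big)^{1/2}\Big(\sum_{\chi}|M(\chi)|^4\Big)^{1/4}\Big(\#\{\theta\neq 0\}\Big)^{1/4}.
\]
This decouples the theta factor (whose unweighted second moment is $\ll p^{3/2}$) from the mollifier, whose \emph{fourth} moment is exactly $\tfrac12(p-1)\mathcal E(N,w)$ with weights on all four variables. The fully symmetric energy is what Theorem~\ref{multh} minimises, and this is how the smaller exponent $\delta$ enters. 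In short: your first-moment analysis is fine, but to reach $\mathcal E(N)$ you must take the fourth moment of the mollifier alone, not the mixed second moment $|\theta|^2|M|^2$.
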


\subsection{Third application: Lower bounds on small moments of character sums}
\hspace{\parindent}  As in Section \ref{secburgess}, we consider $S_{\chi}(N)= \sum_{m\leqslant N} \chi(n)$ where $\chi$ is a multiplicative character modulo a prime $p$. Using probabilistic techniques, Harper \cite{Harpelow} recently proved Helson's conjecture about the first moment of Steinhaus random multiplicative functions (multiplicative random variables whose values at prime integers are uniformly distributed on the complex unit circle). He also investigated the deterministic case and obtained upper bounds on the first moment of character sums. Obtaining sharp lower bounds from the probabilistic methods used in \cite{Harpelow} seems harder \footnote{Private communication with Adam Harper.}. Using Theorem \ref{multh}, we obtain the following lower bound on the $L^r$- norm of character sums. 

\begin{theorem}\label{firstmomentcarac}
Let us fix $4/3<r<2$. For $N\geqslant 1$ and $p$ sufficiently large and $N<\sqrt{p}$, we have  
\begin{equation}
\frac{1}{p-1} \sum_{\chi \neq \chi_0} \left\vert S_{\chi}(N)\right\vert^r \gg \frac{N^{r/2}}{\mathcal{E}(N)^{1-r/2}}.
 \end{equation}In particular, for $p$ sufficiently large and $N<\sqrt{p}$, we have
$$
\frac{1}{p-1} \sum_{\chi \neq \chi_0} \left\vert S_{\chi}(N)\right\vert \gg \frac{\sqrt{N}}{\mathcal{E}(N)^{1/2}} \gg \frac{\sqrt{N}}{(\log N)^{ \delta /2+o(1)}}$$ with $\delta/2 \approx 0.043$ and $\delta$ defined in Theorem \ref{multh}.
\end{theorem}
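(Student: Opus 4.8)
The plan is to test $(S_\chi(N))_\chi$ against a well-chosen family $(T_\chi)_\chi$ and to control $(T_\chi)$ by the mean-value data of Theorem~\ref{multh}; all implied constants may depend on $r$. Fix $w\in(\mathbb{R}_+)^N$ and put $T_\chi:=\sum_{n\leqslant N}w(n)\chi(n)$. Throughout, the hypothesis $N<\sqrt p$ is used via the trivial remark that for integers in $[1,N]$ a congruence $m_1m_2\equiv n_1n_2\pmod p$ (or $m\equiv n\pmod p$) is an equality. By orthogonality of Dirichlet characters, and isolating $\chi_0$ (a term of strictly smaller order since $N<\sqrt p$),
\begin{equation*}
\Big|\sum_{\chi\neq\chi_0}S_\chi(N)\,\overline{T_\chi}\Big|=(p-1-\lfloor N\rfloor)\,\|w\|_1\;\gg\;p\,\|w\|_1,
\end{equation*}
while likewise $\sum_{\chi\neq\chi_0}|T_\chi|^2\leqslant(p-1)\|w\|_2^2$ and $\sum_{\chi\neq\chi_0}|T_\chi|^4\leqslant(p-1)\,\mathcal{E}(N,w)$.

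First I would apply Hölder with exponents $r$ and $r'=r/(r-1)\in(2,4)$ to the first display, and then interpolate the $\ell^{r'}$-norm of $(T_\chi)_{\chi\neq\chi_0}$ between its $\ell^2$- and $\ell^4$-norms with weight $\theta=(4-r')/r'=(3r-4)/r$. The feasibility constraint $\theta\in[0,1]$ is exactly $4/3\leqslant r\leqslant 2$, which is where the hypothesis on $r$ enters (and $r<2$ makes the fourth moment, rather than a higher even moment, suffice). Substituting the two mean-value bounds and cancelling powers of $p$ yields
\begin{equation*}
\frac1{p-1}\sum_{\chi\neq\chi_0}|S_\chi(N)|^{r}\;\gg\;\frac{\|w\|_1^{\,r}}{\|w\|_2^{\,3r-4}\,\mathcal{E}(N,w)^{1-r/2}}.
\end{equation*}

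It then remains to choose $w$. One takes $w$ essentially realizing the infimum defining $\mathcal{E}(N)$, so that $\mathcal{E}(N,w)\ll\mathcal{E}(N)\,\|w\|_1^4N^{-2}$ with $\mathcal{E}(N)=(\log N)^{\delta+o(1)}$ by Theorem~\ref{multh}; one also uses that this near-extremal weight can be chosen with $\ell^2$-norm of the order Cauchy--Schwarz forces, namely $\|w\|_2^2\ll\|w\|_1^2N^{-1}(\log N)^{o(1)}$. With this, all powers of $\|w\|_1$ and of $N$ on the right balance out, leaving $\gg N^{r/2}\mathcal{E}(N)^{r/2-1}$, the asserted inequality. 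For the last assertion (the first moment), apply the displayed bound at the endpoint value $r=4/3$, where $\|w\|_2$ disappears and only the energy estimate of Theorem~\ref{multh} is needed; this gives $\frac1{p-1}\sum_{\chi\neq\chi_0}|S_\chi(N)|^{4/3}\gg N^{2/3}\mathcal{E}(N)^{-1/3}$. Coupling this with the elementary identity $\frac1{p-1}\sum_{\chi\neq\chi_0}|S_\chi(N)|^{2}=\lfloor N\rfloor\big(1-\lfloor N\rfloor/(p-1)\big)\asymp N$ (once more $N<\sqrt p$) through Hölder's inequality relating the $\ell^1$-, $\ell^{4/3}$- and $\ell^2$-norms of $(S_\chi(N))_{\chi\neq\chi_0}$ (that is, $\|\cdot\|_{4/3}\leqslant\|\cdot\|_{1}^{1/2}\|\cdot\|_{2}^{1/2}$), the exponents conspire to give $\frac1{p-1}\sum_{\chi\neq\chi_0}|S_\chi(N)|\gg \sqrt N\,\mathcal{E}(N)^{-1/2}=\sqrt N(\log N)^{-\delta/2+o(1)}$.

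The real obstacle is the structural input invoked above: obtaining, for the full range $4/3<r<2$, a near-optimal weight for $\mathcal{E}(N)$ whose $\ell^2$-norm is as small as possible. This is a statement about the \emph{shape} of the near-extremizers in the multiplication-table-type problem behind Theorem~\ref{multh}, not merely about the value of $\mathcal{E}(N)$, and it is here that one genuinely leans on the proof of that theorem; the first-moment corollary by contrast needs nothing beyond the value, since it can be run at $r=4/3$. Everything else is bookkeeping: the three ``congruence $\Rightarrow$ equality'' reductions, the estimation of the excised $\chi_0$-terms, and the largeness of $p$ all hinge on $N<\sqrt p$, and the Hölder/interpolation arithmetic is routine.
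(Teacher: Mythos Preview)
Your H\"older set-up differs from the paper's in a way that creates a genuine gap for the full range $4/3<r<2$. The paper also pairs $S_\chi(N)$ against the mollifier $M_\chi=\overline{T_\chi}$, but it splits $S_\chi M_\chi=S_\chi^{\alpha}\cdot S_\chi^{\beta}\cdot M_\chi$ (with $\alpha=\tfrac{r}{4-2r}$, $\beta=1-\alpha$) and applies a three-term H\"older with exponents $\big(4-2r,\ \tfrac{8-4r}{4-3r},\ 4\big)$, so that the three factors land respectively in $\mathfrak S_r$, $\mathfrak S_2$ and $\mathfrak M_4$. The point is that the quadratic input is the second moment of the \emph{unweighted} sum $S_\chi(N)$, automatically $\ll N$; the weight $w$ then enters only through $\|w\|_1$ and $\mathcal E(N,w)$, exactly the combination defining $\mathcal E(N)$, and the bound $\mathfrak S_r\gg N^{r/2}\mathcal E(N)^{r/2-1}$ drops out cleanly.

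Your two-step H\"older (first $r,r'$, then interpolate $\|T\|_{r'}$ between $\|T\|_2$ and $\|T\|_4$) instead forces $\|w\|_2$ into the bound, and you then need $\|w\|_2^2\ll\|w\|_1^2N^{-1}(\log N)^{o(1)}$ for a near-extremal weight. This is \emph{not} furnished by the proof of Theorem~\ref{multh}: the extremizer constructed there is the indicator $w_k$ of $\{m:\Omega(m)=k\}$ with $k=[\log_2 N/\log 4]$, for which $\|w_k\|_2^2=\|w_k\|_1$ and hence $\|w_k\|_2^2 N/\|w_k\|_1^2=N/\|w_k\|_1\asymp(\log N)^{\delta/2+o(1)}$, not $(\log N)^{o(1)}$. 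With this correct value your displayed inequality yields only $\mathfrak S_r\gg N^{r/2}(\log N)^{-r\delta/4+o(1)}$, strictly weaker than $N^{r/2}\mathcal E(N)^{r/2-1}=N^{r/2}(\log N)^{-(1-r/2)\delta+o(1)}$ for every $r>4/3$. Flattening the weight to reduce $\|w\|_2$ (say $w\equiv 1$) destroys near-optimality for $\mathcal E$, since then $N^2\mathcal E(N,w)/\|w\|_1^4\asymp\log N$; the two constraints pull in opposite directions, and the ``structural input'' you invoke is not available. The one-line fix is exactly the paper's: replace your $\|T\|_2$ by the second moment of $S_\chi$.

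Your treatment of the first-moment corollary is fine: at $r=4/3$ the exponent $3r-4$ vanishes, $\|w\|_2$ disappears, and only $\|w\|_1$ and $\mathcal E(N,w)$ remain, so your bound $\mathfrak S_{4/3}\gg N^{2/3}\mathcal E(N)^{-1/3}$ is legitimate; the subsequent $\ell^1$--$\ell^{4/3}$--$\ell^2$ interpolation is correct. (The paper's route to $\mathfrak S_1$ is the same interpolation, starting from its $\mathfrak S_r$ bound for any $r$ in the open range.)
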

\begin{remark}
This result can be easily generalized to
composite moduli, but for the sake of simplicity and coherence, we restricted the presentation to the case of prime moduli.
\end{remark}

The same method can be also applied to get a lower bound for 
$$ \frac{1}{T}\int_0^T \Bigg|\sum_{n\leqslant N}n^{it}\Bigg|^r {\rm d} t.$$ The study of the limit $\displaystyle{\lim_{T\to+\infty} \frac{1}{T}\int_0^T  \Bigg|\sum_{n\leqslant N}n^{it}\Bigg|^r  {\rm d} t}$ was initiated by Helson \cite{H06} and further investigated by Bondarenko and Seip in~\cite{BS16}. For any $r \leqslant 1 $, they proved a lower bound of size $\sqrt{N} (\log N)^{-0.07672}$ and obtained for $r=1$  the same bound with an exponent $-0.05616$ using a different method than ours. Their method relies on  \cite[Lemma $3$]{BS16} which does not exist for character sums. We illustrate Theorem \ref{multh} by the following estimates.

\begin{theorem}\label{firstmomentpolyzeta}
Let us fix $4/3<r<2$. For $N\geqslant 1$ and  $N^2<T$, we have  
\begin{equation} 
 \frac{1}{T}\int_0^T  \Bigg|\sum_{n\leqslant N}n^{it}\Bigg|^r {\rm d} t\gg \frac{N^{r/2}}{\mathcal{E}(N)^{1-r/2}}.
 \end{equation}In particular,  we have
$$
\lim_{T\to+\infty} \frac{1}{T}\int_0^T  \Bigg|\sum_{n\leqslant N}n^{it}\Bigg|  {\rm d} t \gg \frac{\sqrt{N}}{\mathcal{E}(N)^{1/2}} \gg \frac{\sqrt{N}}{(\log N)^{ \delta /2+o(1)}}$$ with $\delta/2 \approx 0.043$ and $\delta$ defined in Theorem \ref{multh}.
\end{theorem}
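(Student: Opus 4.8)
The plan is to transpose, essentially verbatim, the argument behind Theorem~\ref{firstmomentcarac}, the orthogonality of the Dirichlet characters modulo $p$ being replaced by the approximate orthogonality
\[
\frac1T\int_0^T\Big(\frac mn\Big)^{it}\,{\rm d}t=\mathbf 1_{m=n}+O\Big(\frac1{T\,|\log(m/n)|}\Big)\qquad(1\le m,n\le N).
\]
This relation is ``as serviceable'' as the character one once $T$ is somewhat larger than $N^2$: the off-diagonal terms, summed against nonnegative coefficients, are controlled by a Montgomery--Vaughan / Hilbert-type inequality and cost at most a logarithm, absorbed by the hypothesis $N^2<T$. Writing $P_N(t):=\sum_{n\le N}n^{it}$ and, for a weight $w\ge 0$ supported on $[1,N]$, $P_w(t):=\sum_n w(n)n^{it}$, we record in this way
\[
\frac1T\int_0^T|P_w(t)|^2\,{\rm d}t=\|w\|_2^2\,(1+o(1)),\qquad
\frac1T\int_0^T|P_w(t)|^4\,{\rm d}t=\mathcal E(N,w)\,(1+o(1)),
\]
and, crucially, $\frac1T\int_0^T P_N(t)\overline{P_w(t)}\,{\rm d}t=\|w\|_1\,(1+o(1))$, the last identity using that $\operatorname{supp}(w)\subset[1,N]$.

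Fix $4/3<r<2$, so that the conjugate exponent $r'=r/(r-1)$ lies in $(2,4)$. We take for $w=v$ the near-extremal weight furnished by the proof of Theorem~\ref{multh}: it is supported on $[1,N]$, realises $\mathcal E(N,v)\asymp\mathcal E(N)\,\|v\|_1^4/N^2$, and can be chosen of essentially maximal mass, $\|v\|_1=N(\log N)^{-o(1)}$ with $\|v\|_2^2=\|v\|_1(\log N)^{o(1)}$, so that $v$ is almost flat on $[1,N]$. Applying H\"older's inequality to the cross-correlation identity and then interpolating the dual norm between $L^2$ and $L^4$ yields
\[
\|v\|_1(1+o(1))\le\Big(\frac1T\int_0^T|P_N|^r\Big)^{1/r}\Big(\frac1T\int_0^T|P_v|^{r'}\Big)^{1/r'},\qquad
\frac1T\int_0^T|P_v|^{r'}\ll\|v\|_2^{\,4-r'}\,\mathcal E(N,v)^{(r'-2)/2}.
\]

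Combining these and simplifying the exponents (using $r/r'=r-1$, $(4-r')(r-1)=3r-4$, $(r'-2)(r-1)=2-r$) gives
\[
\frac1T\int_0^T|P_N|^r\;\gg\;\frac{\|v\|_1^{\,r}}{\|v\|_2^{\,3r-4}\,\mathcal E(N,v)^{(2-r)/2}}
\;\asymp\;\Big(\frac{\|v\|_1}{\|v\|_2}\Big)^{3r-4}\frac{N^{2-r}}{\mathcal E(N)^{(2-r)/2}}
\]
after substituting $\mathcal E(N,v)\asymp\mathcal E(N)\|v\|_1^4/N^2$. Since $3r-4>0$ and $\|v\|_1/\|v\|_2=\sqrt N\,(\log N)^{-o(1)}$ by the flatness of $v$, the powers of $N$ collapse to $N^{r/2}$ and we obtain $\frac1T\int_0^T|P_N|^r\gg N^{r/2}\,\mathcal E(N)^{-(1-r/2)}$, which is the first assertion. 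The case $r=1$ (indeed any $r\le 4/3$) then follows from the range $4/3<r<2$ by one further H\"older step, $\frac1T\int_0^T|P_N|^r\le\big(\frac1T\int_0^T|P_N|\big)^{2-r}\big(\frac1T\int_0^T|P_N|^2\big)^{r-1}$, together with $\frac1T\int_0^T|P_N|^2\asymp N$; as all bounds are uniform in $T>N^2$ they pass to the limit $T\to\infty$, and Theorem~\ref{multh} gives $\mathcal E(N)=(\log N)^{\delta+o(1)}$.

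The only ingredient that is not a routine transcription of the character-sum proof is the choice of $v$: what is needed is a weight that is \emph{simultaneously} near-extremal for $\mathcal E(N,\cdot)/\|\cdot\|_1^4$ and of near-maximal $\ell^1$-norm, so that the Cauchy--Schwarz inequality $\|v\|_1\le\sqrt N\,\|v\|_2$ is almost an equality. This is exactly where the Erd\H{o}s multiplication-table constant $\delta$ enters, and it forces us to realise the extremal configuration behind Theorem~\ref{multh} on a subset of $[1,N]$ of density $(\log N)^{-o(1)}$ rather than on a sparse (nearly multiplicatively Sidon) set; verifying that the construction can be taken in this form is the main point. The archimedean off-diagonal estimates are classical, but they are what pins down the admissible range $N^2<T$.
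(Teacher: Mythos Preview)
Your overall framework---replace character orthogonality by the mean-value estimate for $(m/n)^{it}$, introduce a mollifier $P_w$, and bound the cross term $\frac1T\int P_N\overline{P_w}\asymp\|w\|_1$ by H\"older---is exactly the paper's, which simply says the proof is the same as for Theorem~\ref{firstmomentcarac}. The divergence is in \emph{which} H\"older splitting you use, and this is where your argument develops a real gap.

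The paper writes $|P_N\,\overline{P_w}|=|P_N|^{\alpha}|P_N|^{\beta}|P_w|$ and applies a three-factor H\"older so that the right-hand side involves the three moments $\mathfrak S_r=\frac1T\int|P_N|^r$, $\mathfrak S_2=\frac1T\int|P_N|^2\asymp N$, and $\mathfrak M_4=\frac1T\int|P_w|^4\ll\mathcal E(N,w)$. The point is that after inserting $\mathfrak S_2\asymp N$, the resulting lower bound for $\mathfrak S_r$ depends on $w$ \emph{only through the ratio} $\|w\|_1^4/\mathcal E(N,w)$, whose supremum over $w$ is by definition $N^2/\mathcal E(N)$. No further property of the extremal weight is required.

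You instead bound $\frac1T\int|P_v|^{r'}$ by interpolating between $\frac1T\int|P_v|^2=\|v\|_2^2$ and $\frac1T\int|P_v|^4=\mathcal E(N,v)$. This produces the extra factor $(\|v\|_1/\|v\|_2)^{3r-4}$, and to recover the stated bound you must have $\|v\|_1/\|v\|_2=\sqrt N\,(\log N)^{-o(1)}$ \emph{for a weight that is simultaneously near-extremal for} $\mathcal E(N)$. You correctly flag this as ``the main point'' but do not verify it, and the only near-extremal weight actually constructed in the paper, namely $w_k=\mathbf 1_{\{\Omega(\cdot)=k\}}$ with $k=[\log_2 N/\log 4]$, does \emph{not} have this property: it satisfies $\|w_k\|_1\asymp N(\log N)^{-\delta/2+o(1)}$, hence $\|w_k\|_1/\|w_k\|_2=\sqrt{\|w_k\|_1}=\sqrt N\,(\log N)^{-\delta/4+o(1)}$. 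Plugging this in, your bound becomes
\[
\frac1T\int_0^T|P_N|^r\;\gg\;\frac{N^{r/2}}{\mathcal E(N)^{1-r/2}}\,(\log N)^{-\delta(3r-4)/4+o(1)},
\]
which is strictly weaker than the first displayed inequality of the theorem throughout the range $4/3<r<2$. (It does happen to recover the ``in particular'' bound for $r=1$ in the limit $r\to(4/3)^+$, but not the main claim.) The fix is not to look for a flatter extremal weight but to change the H\"older splitting so that the second-moment input is $\frac1T\int|P_N|^2$ rather than $\frac1T\int|P_v|^2$; then the dependence on $v$ collapses to $\|v\|_1^4/\mathcal E(N,v)$ and the issue disappears.
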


\begin{remark}
Studying the proofs of \cite{BS16}, it is not difficult to see that their method gives also the same exponent $\tfrac\delta2$ \footnote{This was pointed out by ``Lucia" without detailing the proof on mathoverflow https://mathoverflow.net/questions/129264/short-character-sums-averaged-on-the-character in May of 2017. As it was quoted by ``Lucia", the method of \cite{BS16} relies on some input from analysis (lemma 3 of \cite{BS16})
which permits to restrict the sum over the set of integers $n$ such that $\Omega(n)$ is constant whereas we avoid this part using some weights.}. As the proof of our Theorem \ref{firstmomentpolyzeta} is similar to Theorem \ref{firstmomentcarac}, we do not give any details.\end{remark}

\section{Proof of Theorem \ref{gcdsumth}}
For any sequence $w$, we prove an upper bound on $\mathcal{T}_0(N,w)$ defined by
\begin{equation}\label{defT0preuve}
\mathcal{T}_0(N,w):=  \frac{N}{||w||_1^2 }\sum_{m_1 , m_2 \leqslant N} w({m_1})w({m_2})\frac{(m_1,m_2)}{m_1+m_2}   .
\end{equation} As the proof works for $\mathcal{T}_1(N)$, to get an upper bound, we study 
\begin{equation}\label{defT1preuve}
\mathcal{T}_1(N,w):=  \frac{N}{||w||_1^2 }\sum_{m_1 , m_2 \leqslant N} w({m_1})w({m_2})\frac{(m_1,m_2)}{\sqrt{m_1m_2}}   .
\end{equation}
 We consider the case where the weights are supported on the set of integers with a fixed number of prime  factors.   
Precisely, we choose 
\begin{equation}\label{defwk}
w(m)=w_k(m):= 
\begin{cases}
   1  & \text{if } \Omega(m)=k, \\
   0        & \text{otherwise.}
  \end{cases}
\end{equation} where $k=\kappa\log_2N\in \mathbb{N}$,   $\kappa\in ]0,1[$,  and 
$\Omega(n)$  denotes the number of prime factors of $n$ counted with multiplicity.   
 We introduce the function~$Q$ defined by
$$Q(\lambda):=\lambda\log\lambda-\lambda+1.$$
It is decreasing in the range $[0,1]$ and increasing in $[1,+\infty[$. Assuming  $\kappa\in [\kappa_0,2-\kappa_0]$ with $\kappa_0$ fixed in $ ]0,1[$, we have uniformly (see for instance \cite[Chapter II.6, Theorem $6.5$]{Tencourse}) for large $N$
\begin{equation}\label{asymp} ||w_k||_1=\sum_{m\leqslant N}w_k(m)\asymp \frac{N}{ (\log N)^{Q(\kappa)}\sqrt{\log_2N}}.\end{equation} 
 Moreover, when $k=\kappa\log_2N\in \mathbb{N}$ and  $\kappa\in [0,2-\kappa_0]$, we have uniformly
\begin{equation}\label{uniupper}
\sum_{m\leqslant N}w_k(m)\ll \frac{N}{ (\log N)^{Q(\kappa)} }.\end{equation}
But we have also an uniform upper bound without restriction on $k$
\begin{equation}\label{uniupper*}
\sum_{m\leqslant N}w_k(m)\ll \frac{N}{ (\log N)^{\min\{Q(\kappa)  ,3/8\}} },\end{equation}
since $\tfrac38<Q(2)$.\goodbreak

In order to bound $\mathcal{T}_1(N,w_k)$,
we write $$\mathcal{T}_1(N,w_k)\leqslant 2\frac{N}{||w||_1^2 }\big(S_{1}+ S_{2}\big)$$
with
\begin{align*} S_1 &:= \sum_{d\leqslant \sqrt{N}} \sum_{m_1\leqslant m_2\leqslant N/d} \frac{w_k(dm_1)w_k(dm_2)}{\sqrt{m_1m_2}},\nonumber\\
S_{2} &:= \sum_{m_1\leqslant m_2\leqslant \sqrt{N}} \sum_{d\leqslant N/m_2} \frac{w_k(dm_1)w_k(dm_2)}{\sqrt{m_1m_2}}.
\nonumber
\end{align*}

Let $S_{1}(j)$ and $S_{2}(j)$ be the  contribution in each sum corresponding with~$d$ such that  $\Omega(d)=j=\lambda\log_2N\leqslant k$. We have, using \eqref{uniupper*} once and \eqref{uniupper} twice
\begin{align*}  S_{1}(j)&=\sum_{d\leqslant \sqrt{N}}w_j(d) \sum_{m_1\leqslant m_2\leqslant N/d} \frac{w_{k-j}(m_1)w_{k-j}(m_2)}{ \sqrt{m_1m_2}}\\
&\ll \sum_{d\leqslant \sqrt{N}}w_j(d) \sum_{  m_2\leqslant N/d} \frac{ w_{k-j}(m_2) }{
(\log 2m_2)^{\min\{Q((\kappa-\lambda)\log_2N/\log_2m_2),3/8\}} }
\\
&\ll  N\big(
(\log N )^{-2Q( \kappa-\lambda)  }+(\log N )^{- Q( \kappa-\lambda)-3/8  }\big)\sum_{d\leqslant \sqrt{N}}\frac{w_j(d)  }{ d}
\\
&\ll  N                                              
(\log N )^{-2Q( \kappa-\lambda) -\min\{ 0, 3/8-Q( \kappa-\lambda  )\} } \big(1+
(\log N )^{1-Q( \lambda)  +o(1) }\big)
\\
&\ll  N 
(\log N )^{-2Q( \kappa-\lambda )+1-Q( \lambda)  -\min\{ 0, 3/8-Q( \kappa-\lambda  )\}+o(1) } 
.\end{align*}

 The sums $S_{2}(j)$ satisfy the same kind of bound. 
 We have 
\begin{align*}  S_{2}&(j) =  \sum_{m_1\leqslant m_2\leqslant  \sqrt{N}} \frac{w_{k-j}(m_1)w_{k-j}(m_2)}{ \sqrt{m_1m_2}}\sum_{d\leqslant N/m_2}w_j(d)\\
&\ll  N(\log N)^{-Q(\lambda)}\sum_{m_1\leqslant m_2\leqslant  \sqrt{N}} \frac{w_{k-j}(m_1)w_{k-j}(m_2)}{ m_1^{1/2}m_2^{3/2}}\\
&\ll  N(\log N)^{-Q(\lambda)}\sum_{  m_2\leqslant  \sqrt{N}} \frac{ w_{k-j}(m_2)}{ m_2(\log 2m_2)^{\min\{Q((\kappa-\lambda)\log_2N/\log_2m_2),3/8\}}  } 
\\  
&\ll  N  
(\log N )^{  -Q( \lambda)  }+N
(\log N )^{1-2Q( \kappa-\lambda )-Q( \lambda) -\min\{ 0, 3/8-Q( \kappa-\lambda)\} +o(1) } 
.\end{align*}
 
 Then, integrating over $j$,
$$S_{1}+ S_{2}\ll(\log_2N)\max_{j\leqslant k} (S_{1}(j)+S_{2}(j)) \ll 
N 
(\log N )^{-g_0(\kappa) +o(1) } $$
with 
$$g_0(\kappa):=\min\Big\{ \min_{\lambda\in [0,\kappa]}\big(2Q( \kappa-\lambda ) +Q( \lambda)-1, Q( \kappa-\lambda ) +Q( \lambda)-\tfrac58\big), Q( \kappa)\Big\}.$$

The minimum on $\lambda$ of the first expression is obtained when $\lambda$ is the solution $\leqslant \kappa$ of  $2\log (\kappa-\lambda)=\log  \lambda,$ id est
$$\lambda=\lambda_{\kappa,1}:= \tfrac12\big( 2\kappa+1-\sqrt{4\kappa+1}\big) .$$  Moreover,  the minimum on $\lambda$ of the second term is obtained when $\lambda$ is the solution $\leqslant \kappa$ of  $\log (\kappa-\lambda)=\log  \lambda,$ id est
$$\lambda=\lambda_{\kappa,2}:= \tfrac{1}{2}\kappa .$$

So we have
$$\min\big\{2Q( \kappa-\lambda_{\kappa,1})  +Q( \lambda_{\kappa,1})-1,  2Q( \tfrac12\kappa  ) -\tfrac58, Q( \kappa)\big\} \leqslant g_0(\kappa) $$ and by (\ref{asymp}) we deduce
$$\mathcal{T}_1(N,w_k) \ll  (\log N)^{    f_0(\kappa)+o(1) }$$
with 
\begin{align*}
f_0(\kappa)&:= \max\big\{1+2Q(\kappa)-2Q( \kappa-\lambda_{\kappa,1}) -Q( \lambda_{\kappa,1}),  2Q(\kappa)-2Q( \tfrac12\kappa) + \tfrac58, Q(\kappa)\big\}.
\end{align*}
It remains to choose $\kappa$ to minimize $f_0(\kappa)$. 
We choose $\kappa^*$ verifying 
\begin{equation}1+ Q(\kappa^*)-2Q( \kappa^*-\lambda_{\kappa^*,1}) -Q( \lambda_{\kappa^*,1})= 0.
\label{defkappa*}\end{equation}
 In this case, $$\delta_0= \min_{\scriptstyle\kappa\in ]0,1[}\{  f_0(\kappa)\}=  f_0(\kappa^*)= \max\big\{Q(\kappa^*),2Q(\kappa^*)-2Q( \tfrac12\kappa^* ) + \tfrac58\big\} .$$ Solving numerically this equation, we see that $\kappa^*\approx 0.48154$ and $Q(\kappa^*)\approx 0.16656$. We verify numerically that $2Q(\kappa^*)-2Q( \tfrac12\kappa^*) + \tfrac58 \approx 0.1253$.  This implies that $\mathcal{T}_1(N) \leqslant \mathcal{T}_1(N,w_k) \ll (\log N)^{  \delta_0 +o(1) }$   which concludes the proof of the upper bound.
\\

 \hspace{\parindent} 
For any sequence $w$, we prove a lower bound on $\mathcal{T}_0(N,w)$ defined in~\eqref{defT0preuve}. When $n_1m_1 = n_2m_2$, there exists $n $ such that $n_1=nm_2/(m_1,m_2)$ and $n_2=nm_1/(m_1,m_2)$
so that,   for any choice $w$ of positive weights,
$$ \sum_{\scriptstyle  m_1,m_2,n_1,n_2 \leqslant  N \atop\scriptstyle  n_1m_1 = n_2m_2} w(m_1)w(m_2) \leqslant   N\sum_{m_1,m_2\leqslant N}w(m_1)w(m_2)\frac{ (m_1,m_2)}{m_1+m_2}.$$
Hence we obtain 
\begin{equation}\label{minogcdenergy} \mathcal{T}_0(N,w) \geqslant \frac1{||w||_1^{2}}\sum_{\scriptstyle m_1,m_2,n_1,n_2 \leqslant  N \atop \scriptstyle n_1m_1 = n_2m_2} w(m_1)w(m_2) .\end{equation}
By integration over $k$, there exists $0\leqslant k=\kappa \log_2 N \leqslant  \log_2 N$ such that \begin{equation}\label{cas1}||w.w_k||_1 \gg \frac{||w||_1 }{2(1+\log_2 N)}   \end{equation} or
\begin{equation}\label{cas2} ||w.w_+||_1 \gg \tfrac12||w||_1   \end{equation} 
where 
$$w_+(m):=\sum_{\scriptstyle m\leqslant N\atop\scriptstyle \Omega(m)> \log_2N}1.$$

We first look at the former case. Let $\rho=\rho_\kappa\in [1,\tfrac12(1+\sqrt{5})]$ a parameter that we will choose depending on the value of $\kappa$. When $r=\rho\log_2N$ with $\rho\in [1,\tfrac12(1+\sqrt{5})]$, by \eqref{asymp}, we have the lower bound
$$||w_r||_1=\sum_{\scriptstyle n\leqslant N\atop\scriptstyle  \Omega(n)=r}1\gg \frac{N}{ (\log N)^{Q(\rho)+o(1)}}.$$
 By Cauchy-Schwarz inequality, we get
\begin{align*}\frac{N^2}{(\log N)^{2Q(\rho)+o(1)}}||w.w_k||_1^{2}&\ll\left\{\sum_{  m,n  \leqslant  N } w(m )w_k(m)w_r(n)\right\}^2 \cr & =\left\{\sum_{\ell \leqslant  N^2}  \sum_{\scriptstyle \ell=m n  \atop \scriptstyle  m,n\leqslant N} w(m)w_k(m)w_r(n) \right\}^{2} \cr&\leqslant \left\{\sum_{\scriptstyle m_1,m_2,n_1,n_2 \leqslant  N \atop \scriptstyle n_1m_1 = n_2m_2} w(m_1)w(m_2)\right\}H_{k,r}( N),\end{align*}
where
$$H_{k,r}( N):=\big|\big\{  \ell \leqslant N^2 \, :\, \exists n,m\leqslant N \,\quad \ell=nm,\, \Omega(m)=k ,\,\Omega(n)=r\big\}\big|  . $$ Using \eqref{minogcdenergy} and \eqref{cas1}, we obtain
$$\mathcal{T}_0(N,w)  \gg  \frac{N^2}{(\log N)^{2Q(\rho)+o(1)}H_{k,r}(N)}.$$ 
To bound $H_{k,r}( N)$, we observe that, when $\Omega(m)= \kappa \log_2N$, we have $\Omega(\ell)\geqslant (\rho+\kappa)[\log_2N]$. Then, by \eqref{uniupper} and \eqref{uniupper*}, we get
$$H_{k,r}(N)\ll N^2(\log N)^{-\delta_{1,k}+o(1)}
$$
with 
$$\delta_{1,k}:=\max\{ Q(\kappa)+Q(\rho_\kappa),\min\{Q(\rho_\kappa+\kappa),\tfrac38 \} \} .$$ 
We used here that $ \kappa \leqslant 1$ and $ \rho_\kappa \leqslant \tfrac12(1+\sqrt{5})$ to apply \eqref{uniupper}.
We deduce the lower bound
$$\mathcal{T}_0(N,w) \geqslant (\log N)^{\delta^*_{1,\kappa}+o(1)}$$  where
$$\delta_{1,\kappa}^*:= \max\left\{Q(\kappa)-Q(\rho_\kappa),\min\{Q(\rho_\kappa+\kappa),\tfrac38 \}-2Q(\rho_\kappa)\right\}.$$

Since $\max_{\rho\geqslant 1}(Q(\kappa)-Q(\rho ))=Q(\kappa)-Q(1)=Q(\kappa)$, we have $$\mathcal{T}_0(N,w) \geqslant (\log N)^{\delta_1+o(1)}$$  where
$$\delta_1:=\min_{\kappa \in [0,1]}\max\left\{Q(\kappa),\min\{Q(\rho_{\kappa}+\kappa),\tfrac38 \}-2Q(\rho_{\kappa})\right\}.$$ 
Note that the value $\rho= \frac{1}{2}(1+\sqrt{1+4\kappa})$ maximizes the quantity $Q(\rho+\kappa)-2Q(\rho)$.
 We introduce $\kappa_2$ as the unique solution of the equation 
$$ Q(\tfrac{1}{2}(1+\sqrt{1+4\kappa})+\kappa)-2Q(\tfrac{1}{2}(1+\sqrt{1+4\kappa}))=\tfrac38 -2Q(\tfrac{1}{2}(1+\sqrt{1+4\kappa})).$$ We further define 
$$\rho_\kappa:=\begin{cases}1 &\text{ if } 0 \leqslant \kappa \leqslant \kappa^*\cr
\frac{1}{2}(1+\sqrt{1+4\kappa})  &\text{ if } \kappa^*\leqslant \kappa\leqslant \kappa_2
\cr
1 &\text{ if } \kappa\geqslant \kappa_2.
 \end{cases}$$ 
where $\kappa^*$ is defined by \eqref{defkappa*}. For $\kappa \geqslant \kappa_2 \approx 0.6565$, we have $$\min\{Q(1+\kappa),\tfrac38 \}\geqslant Q(1+\kappa_{2})\approx 0.179154> \delta_0
  .$$  For $\kappa \leqslant \kappa_2$, we have $$Q(\tfrac{1}{2}(1+\sqrt{1+4\kappa})+\kappa)-2Q(\tfrac{1}{2}(1+\sqrt{1+4\kappa})) \leqslant \tfrac38-2Q(\tfrac{1}{2}(1+\sqrt{1+4\kappa}). $$ It follows that $$\mathcal{T}_0(N,w) \geqslant (\log N)^{\min\left\{\delta_2,\delta_0\right\}+o(1)}$$  with
$$\delta_2:=\min_{\kappa \in [0,\kappa_2]}\max\left\{Q(\kappa),Q(\rho_{\kappa}+\kappa)-2Q(\rho_{\kappa})\right\}.$$

The minimum value defined by $\delta_2$ is attained when $\kappa$ is solution of the equation
\begin{equation}\label{kappa1}Q(\kappa)-Q(\rho_{\kappa}+\kappa)+2Q(\rho_{\kappa})=0. \end{equation}  It is not hard to see \footnote{We verified it using a computer algebra system.} that the unique solution to \eqref{kappa1} is $\kappa^* \approx 0.48154$ . Hence we have $\delta_2=\delta_0= Q(\kappa^*)\approx 0.16656$. This concludes the proof in this case.

 In the latter case, we choose $r=[\log_2N]$ and, by the same method, we get 
\begin{align*} N^2 (\log N)^{ o(1)} ||w.w_+||_1^{2}&\ll\left\{\sum_{  m,n  \leqslant  N } w(m )w_+(m)w_r(n)\right\}^2  \cr&\leqslant \left\{\sum_{\scriptstyle m_1,m_2,n_1,n_2 \leqslant  N \atop \scriptstyle n_1m_1 = n_2m_2} w(m_1)w(m_2)\right\}H_{+,r}( N),\end{align*}
where
$$H_{+,r}( N):=\big|\big\{ \ell \leqslant N^2 \, :\, \exists n,m\leqslant N \quad \ell=nm,\, \Omega(m)\geqslant  \log_2N,\Omega(n)=r \big\} \big| . $$
Since $ H_{+,r}( N)\ll N^2(\log N)^{-3/8+o(1)}$ and $\tfrac38>\delta_0$ we obtain by  \eqref{minogcdenergy} and \eqref{cas2}
$$\mathcal{T}_0(N,w) \geqslant (\log N)^{\delta_0+o(1)}.$$

\section{Proof of Theorem \ref{multh}}
\hspace{\parindent} 
First we prove the lower bound on $\mathcal{E}(N)$ defined by \eqref{defm}.  Indeed we have, for any choice of positive weights, by Cauchy-Schwarz 
\begin{align*}\left\{\sum_{a,b \leqslant  N} w(a)w(b)\right\}^2&=\left\{\sum_{n \leqslant  N^2}  \sum_{\scriptstyle n=ab \atop \scriptstyle  a,b \leqslant  N} w(a)w(b) \right\}^{2} \cr&\leqslant  \mathcal{E}(N,w)\sum_{\scriptstyle n \leqslant N^2\atop\scriptstyle  \exists a,b\leqslant N \, n=ab  } 1.\end{align*} Thus we have 
$$\frac{N^2 \mathcal{E}(N,w)}{||w||_1^4} \geqslant (\log N)^{\delta+o(1)}$$  by the known results on the multiplication table of Erd\H{o}s \cite{Tenintervalle,Ford}. \\

We now focus on the proof of the upper bound. Similarly as before, we set $w =w_k $ as defined in \eqref{defwk} where $k=\kappa\log_2N\in \mathbb{N}$,   $\kappa\in ]0,1[$.  We remark that if $m_1n_1=m_2n_2$ then $n_1$ has to be a multiple of $\frac{m_2}{(m_1,m_2)}$ and similarly $n_2$ has to be a multiple of $ \frac{m_1}{(m_1,m_2)}$.
Then we can parametrize the solution of $m_1n_1=m_2n_2$ by
$$m_1=h d_1,\quad m_2=hd_2,\quad  n_1=\ell d_2,\quad  n_2=\ell d_1,$$ with 
$(d_1,d_2)=1$ so that
\begin{equation}\label{calculE} \mathcal{E}(N,w) =\sum_{\scriptstyle m_1,m_2  \leqslant N \atop\scriptstyle (m_1,m_2)=1 } \left(\sum_{h\leqslant N/\max\{ m_1,m_2\}}w(h m_1)w(h m_2)\right)^2.  \end{equation}

  This immediately implies 
$$ \mathcal{E}(N,w) \leqslant 2E_1 + 2E_2$$ where
\begin{align*} E_1 &:= \sum_{h\leqslant \sqrt{N}} \sum_{m_1\leqslant m_2\leqslant N/h} w_k(hm_1)w_k(hm_2)\sum_{\ell  \leqslant N/m_2} w_k(\ell  m_2),\nonumber\\
E_{2} &:= \sum_{m_1\leqslant m_2\leqslant \sqrt{N}} \left(\sum_{h\leqslant N/m_2}w_k(hm_1)w_k(hm_2) \right)^2.
\nonumber
\end{align*}

Let $E_{1}(j)$ and $E_{2}(j)$ be the  contribution in each sum corresponding with~$h$ such that  $\Omega(h)=j=\lambda\log_2N\leqslant k$. 
As for $h\leqslant \sqrt{N}$, we have
\begin{align*}&\sum_{m_1\leqslant m_2\leqslant N/h}   \frac{w_{k-j}(m_1)w_{k-j}(m_2)}{m_2}\cr&\quad\ll\sum_{ m_2\leqslant N/h}  w_{k-j}(m_2) 
(\log 2m_2)^{-\min\{Q((\kappa-\lambda)\log_2N/\log_2m_2),3/8\}} \cr&\quad\ll\frac Nh(\log N )^{-2Q( \kappa-\lambda)-\min\{ 0, 3/8-Q( \kappa-\lambda  )\}  }\end{align*}
we get, using (\ref{uniupper}) 
\begin{align*}  E_{1}(j)&= \sum_{h\leqslant \sqrt{N}}w_j(h) \sum_{m_1\leqslant m_2\leqslant N/h}  w_{k-j}(m_1)w_{k-j}(m_2)\sum_{\ell  \leqslant N/m_2} w_{j}(\ell )\\
&\ll  N\sum_{h\leqslant \sqrt{N}}w_j(h)\sum_{m_1\leqslant m_2\leqslant N/h}  \frac{w_{k-j}(m_1)w_{k-j}(m_2)}{m_2}
(\log 2h)^{-Q(\lambda)}
\\ 
&\ll   N^2(\log N )^{-2Q( \kappa-\lambda)-\min\{ 0, 3/8-Q( \kappa-\lambda  )\}  }\sum_{h\leqslant \sqrt{N}}\frac{w_j(h)  }{ h}
(\log 2h)^{-Q(\lambda)}
\\ 
&\ll  N^2 
(\log N )^{-2Q( \kappa-\lambda ) -\min\{ 0, 3/8-Q( \kappa-\lambda  )\} }\cr&\quad+N^2 
(\log N )^{-2Q( \kappa-\lambda )+1-2Q( \lambda) -\min\{ 0, 3/8-Q( \kappa-\lambda  )\} +o(1) } 
.\end{align*}

 The sums $E_{2}(j)$ satisfy the same kind of bound. We have 
\begin{align*} & E_{2}(j) =  \sum_{m_1\leqslant m_2\leqslant  \sqrt{N}} w_{k-j}(m_1)w_{k-j}(m_2) \sum_{h\leqslant N/m_2}w_j(h)\sum_{\ell  \leqslant N/m_2} w_{j}(\ell )\\
&\ll  N^2(\log N)^{-2Q(\lambda)}\sum_{m_1\leqslant m_2\leqslant  \sqrt{N}} \frac{w_{k-j}(m_1)w_{k-j}(m_2)}{ m_2^{2}} \\
&\ll N^2(\log N)^{-2Q(\lambda)}\sum_{  m_2\leqslant  \sqrt{N}} \frac{ w_{k-j}(m_2)}{ m_2(\log 2m_2)^{ \min\{Q((\kappa-\lambda)\log_2N/\log_2m_2),3/8\}}  } 
\\  
&\ll   N^2  
(\log N )^{  -2Q( \lambda)  }+N^2
(\log N )^{1-2Q( \kappa-\lambda )-2Q( \lambda) -\min\{ 0, 3/8-Q( \kappa-\lambda  )\} +o(1) } 
.\end{align*}
 
 Then, integrating over $j$,
$$E_{1}+ E_{2}\ll(\log_2N)\max_{j\leqslant k} (E_{1}(j)+E_{2}(j)) \ll 
N ^2
(\log N )^{-g(\kappa) +o(1) } $$
with 
$$g(\kappa):=\min\Big\{ \!\min_{\lambda\in [0,\kappa]}\big(2Q( \kappa-\lambda ) +2Q( \lambda)-1, Q( \kappa-\lambda ) +2Q( \lambda) -\tfrac58 \big), 2Q( \kappa)\Big\}.$$ The minimum on $\lambda$ of the first expression is obtained when $\lambda:= \tfrac{1}{2}\kappa $ as before. Moreover, the minimum of the second term is obtained when $\lambda$ is the solution $\leqslant \kappa$ of  $\log (\kappa-\lambda)=2\log  \lambda,$ id est
$$\lambda=\lambda_{\kappa,3}:= \tfrac12\big(\sqrt{4\kappa+1}-1\big).$$
So we have
$$g(\kappa) \geqslant \min\big\{4Q(\tfrac{1}{2}\kappa)-1, Q( \kappa-\lambda_{\kappa,3})+ 2Q(\lambda_{\kappa,3})-\tfrac58, 2Q( \kappa)\big\}.$$
Inserting \eqref{asymp} in \eqref{defm}
$$ \mathcal{E}(N) \ll  (\log N)^{ f(\kappa)+o(1) }$$
with 
\begin{align*}
f(\kappa):=  \max\big\{4Q(\kappa)-4Q(\tfrac{1}{2}\kappa)+1, 
 4Q(\kappa)-Q( \kappa-\lambda_{\kappa,3})- 2Q(\lambda_{\kappa,3})+\tfrac58, 2Q(\kappa)\big\}.
\end{align*}
It remains to choose $\kappa$ to minimize $f(\kappa)$. This occurs for $\kappa^*=1/\log 4$ verifying $$1+ 2Q(\kappa^*)-4Q(\tfrac1{2}{\kappa^*})= 0.$$ In this case, $$\delta= \min_{\scriptstyle\kappa\in ]0,1[}\{  f(\kappa)\}=  f(1/\log 4)= \max\{2Q(1/\log 4), \alpha\} $$  where 
$$ \alpha:= 4Q(1/\log 4)-Q( 1/\log 4-\lambda_{1/\log 4,3})- 2Q(\lambda_{1/\log 4,3})+ \tfrac58 \approx 0.046.$$ This implies that $\mathcal{E}(N) \ll (\log N)^{\delta+o(1)}$ with $\delta= 1-\frac{1+\log_2 2}{\log 2}\approx 0.08607$ which concludes the proof.

\section{Logarithmic improvement of Burgess' bound}

\subsection{Preliminary results}
\hspace{\parindent} 
The following result is a consequence of the Weil bounds for complete character sums, see for instance~\cite[Lemma~12.8]{IK}.
\begin{lemma}
\label{moments}
Let $r \geqslant 2$ be an integer, $B\geqslant 1$,  $p$ a prime and $\chi$ a nontrivial multiplicative character  modulo $p$. Then we have
$$\sum_{u=1}^{p}\left|\sum_{1\leqslant b \leqslant B}\chi(u+b) \right|^{2r}\leqslant  (2r)^r B^{r}p + 2rB^{2r}p^{1/2}.$$
\end{lemma}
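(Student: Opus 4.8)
The plan is to expand the $2r$-th power and reduce everything to a count of solutions of a polynomial congruence, then apply Weil's bound for complete character sums. First I would write
$$\sum_{u=1}^{p}\Bigl|\sum_{1\leqslant b\leqslant B}\chi(u+b)\Bigr|^{2r}
=\sum_{b_1,\dots,b_r,c_1,\dots,c_r\leqslant B}\;\sum_{u=1}^{p}\chi\bigl((u+b_1)\cdots(u+b_r)\bigr)\overline{\chi}\bigl((u+c_1)\cdots(u+c_r)\bigr).$$
Setting $f(X)=\prod_{i}(X+b_i)$ and $g(X)=\prod_{j}(X+c_j)$, the inner sum is $\sum_{u}\chi\bigl(f(u)g(u)^{p-2}\bigr)$ up to the harmless adjustment at the (at most $2r$) values of $u$ where some factor vanishes mod $p$. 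The rational function $f/g$ is a perfect $\mathrm{ord}(\chi)$-th power in $\overline{\mathbb{F}_p}(X)$ precisely when the multiset $\{b_1,\dots,b_r\}$ equals $\{c_1,\dots,c_r\}$ modulo $p$; since $B\geqslant 1$ could exceed $p$ this is the point to be slightly careful, but as stated the bound we want already accommodates this via the $B^{2r}p^{1/2}$ term, so I would simply split into the ``diagonal'' multisets and the rest.

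For the non-diagonal terms, Weil's bound (in the form of \cite[Lemma~12.8]{IK} or the classical Weil estimate for character sums of rational functions) gives $\bigl|\sum_u \chi(f(u)g(u)^{p-2})\bigr|\leqslant (2r-1)p^{1/2}\leqslant 2r\,p^{1/2}$, because $fg$ has at most $2r$ distinct roots. There are at most $B^{2r}$ such tuples, contributing at most $2r\,B^{2r}p^{1/2}$. For the diagonal terms — those where $\{c_j\}$ is a permutation of $\{b_i\}$ as multisets mod $p$ — the inner sum over $u$ is trivially bounded by $p$, and the number of such tuples is at most $r!$ times the number of choices of $(b_1,\dots,b_r)$, i.e.\ at most $r!\,B^r\leqslant (2r)^r B^r$ (using $r!\leqslant r^r\leqslant (2r)^r$). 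This contributes at most $(2r)^rB^rp$. Adding the two estimates yields exactly $(2r)^rB^rp+2rB^{2r}p^{1/2}$.

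The main obstacle, and the only genuinely delicate point, is the bookkeeping at the roots of $f$ and $g$ modulo $p$ and the precise condition under which $f(X)g(X)^{p-2}$ fails to be ``$\mathrm{ord}(\chi)$-th-power-like'' so that Weil applies with the stated constant. I would handle this by noting that removing the $\leqslant 2r$ bad values of $u$ changes each inner sum by at most $2r$ in absolute value, which is absorbed into the diagonal bound after a trivial adjustment of constants; one checks that the claimed numerical constants $(2r)^r$ and $2r$ are generous enough to absorb these lower-order effects, so no sharper estimate is needed. Everything else is a routine combinatorial count plus one invocation of the Weil bound already cited in the lemma's preamble.
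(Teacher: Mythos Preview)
The paper does not give its own proof of this lemma; it simply records it as ``a consequence of the Weil bounds for complete character sums, see for instance~\cite[Lemma~12.8]{IK}''. Your outline is precisely the standard argument behind that citation, and the expansion, the invocation of Weil with constant $\leqslant 2r$ (coming from at most $2r$ distinct linear factors), and the trivial bound $p$ on the degenerate inner sums are all correct.

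There is, however, a genuine gap in your diagonal/off-diagonal split. The assertion that $f/g$ is a perfect $d$-th power (with $d=\mathrm{ord}(\chi)$) \emph{precisely} when the multisets $\{b_i\}$ and $\{c_j\}$ coincide is false whenever $d\leqslant r$. For instance, take $\chi$ to be the Legendre symbol ($d=2$) and $r=2$, with $b_1=b_2=1$ and $c_1=c_2=2$: then $f/g=\bigl((X+1)/(X+2)\bigr)^2$ is a perfect square although the multisets differ, and the inner sum over $u$ equals $p-2$, not $O(\sqrt p)$. So Weil does \emph{not} cover all of your ``non-diagonal'' terms, and your count $r!\,B^r$ is an undercount of the bad set. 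The issue you flag as ``the only genuinely delicate point'' --- adjusting for the $\leqslant 2r$ values of $u$ where some factor vanishes --- is a separate (and, as you say, harmless) matter; it does not touch this degeneracy problem.

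The correct degenerate set consists of those $2r$-tuples for which every value $v$ has $m_b(v)\equiv m_c(v)\pmod d$, where $m_b(v),m_c(v)$ are the multiplicities among the $b_i$ and the $c_j$. Since $d\geqslant 2$, in any such tuple each distinct value must occur at least twice among the $2r$ entries (if $m_c(v)=0$ then $d\mid m_b(v)$ forces $m_b(v)\geqslant 2$, and symmetrically; otherwise $m_b(v)+m_c(v)\geqslant 2$), so there are at most $r$ distinct values in all. It is the cardinality of \emph{this} set that has to be shown to be $\leqslant (2r)^r B^r$, and this is where the constant $(2r)^r$ actually comes from in \cite{IK}; the bound $r!\,B^r$ only accounts for the strictly smaller ``multisets equal'' sub-diagonal. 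Once the split is made along these correct lines, the rest of your argument goes through verbatim.
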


For any fixed couple $(a_1,a_2)$, we denote by $T(a_1,a_2;M,N)$  the number of solutions $M<n_1,n_2\leqslant M+N $ of the congruence 
\begin{equation}
\label{congruence}
n_1a_1\equiv n_2a_2 \, (\bmod\, p) 
\end{equation} 
and
$$T_w(M,N,A):=\sum_{a_1,a_2\leqslant A}w(a_1)w(a_2) T(a_1,a_2;M,N)$$
Following the lines of the proof of \cite[Lemma 4.1]{Burgessrefine}, we can prove the following upper bound.

\begin{lemma}\label{abcd}
Let $p$ be a prime and $M,N$,$A$ integers such that 
\begin{equation}\label{bornes} A \leqslant N, \hspace{3mm} AN \leqslant p.  \end{equation}  
For any sequence $w\in \mathbb{C}^N$, we have the following upper bound
$$ T_w(M,N,A) \ll  \Big(\sum_{a \leqslant A}|w(a )|\Big)^2+N\sum_{a_1,a_2\leqslant A}|w(a_1)w(a_2)|\frac{ (a_1,a_2)}{a_1+a_2}  .$$
\end{lemma}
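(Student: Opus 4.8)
The plan is to estimate $T_w(M,N,A)$ by analyzing the congruence $n_1 a_1 \equiv n_2 a_2 \pmod p$ according to whether or not both sides actually agree as integers. First I would reduce to coprime pairs: writing $d=(a_1,a_2)$, $a_1=d b_1$, $a_2=d b_2$ with $(b_1,b_2)=1$, the congruence becomes $n_1 b_1 \equiv n_2 b_2 \pmod{p}$ (since $p$ is prime and $d<A\le N\le p$, so $p\nmid d$). So it suffices to count, for each coprime pair $(b_1,b_2)$ with $b_1,b_2 \le A/d$, the number of $M<n_1,n_2\le M+N$ with $n_1 b_1 \equiv n_2 b_2 \pmod p$, then sum against $w(a_1)w(a_2)$.

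Next I would split into the \emph{trivial solutions}, where $n_1 b_1 = n_2 b_2$ as integers, and the \emph{nontrivial solutions}, where $n_1 b_1 \ne n_2 b_2$ but they are congruent mod $p$. For the trivial ones: $(b_1,b_2)=1$ forces $b_2 \mid n_1$ and $b_1 \mid n_2$, so $n_1 = b_2 t$, $n_2 = b_1 t$; the number of admissible $t$ is $\ll N/\max\{b_1,b_2\} \ll N d/(a_1+a_2)$ up to a constant, after noting $\max\{b_1,b_2\} \asymp b_1+b_2 = (a_1+a_2)/d$. Summing $\sum_{a_1,a_2\le A}|w(a_1)w(a_2)| \cdot N(a_1,a_2)/(a_1+a_2)$ gives exactly the second term in the claimed bound. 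For the nontrivial ones: $n_1 b_1 - n_2 b_2$ is a nonzero multiple of $p$, and $|n_1 b_1 - n_2 b_2| \le (M+N)\max\{b_1,b_2\} - M\min\{b_1,b_2\} \le$ something of size $\ll N \cdot A/d$ once one uses $M<n_i\le M+N$; the key input is the hypothesis $AN \le p$ (together with $A\le N$) to control the range. The point is that the number of available multiples of $p$ in the relevant range is bounded, and for each fixed value $n_1 b_1 - n_2 b_2 = kp$ the number of $(n_1,n_2)$ is controlled by a divisor-type argument; altogether the nontrivial contribution should be $\ll (\sum_{b}1)^2$ over the coprime pairs, which after resumming over $d$ and weighting yields $\ll (\sum_{a\le A}|w(a)|)^2$, the first term. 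This is the step I would model closely on \cite[Lemma~4.1]{Burgessrefine}.

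The main obstacle I expect is the bookkeeping in the nontrivial case: one must handle the interaction between the range of $n_1 b_1 - n_2 b_2$ (which depends on $M$ in a way that, for large $M$, is not simply bounded by $N\max\{b_1,b_2\}$ unless one is careful), the constraint $AN\le p$, and the coprimality reduction, all while tracking the weights $w$. A clean way to organize it is: fix $d$ and the residue class, observe that $n_1 b_1 \equiv n_2 b_2 \pmod p$ with $n_1,n_2$ in an interval of length $N$ and $b_1,b_2 \le A/d$ means $n_1$ determines $n_2$ modulo $p/(b_2, p)= p$, hence (since $N< p$) determines $n_2$ uniquely unless the corresponding $n_2$ falls outside the interval — so for each $n_1$ there is at most one $n_2$, giving $\le N$ pairs trivially, but this crude bound is too weak; the refinement is that the \emph{number of $n_1$ for which the forced $n_2$ lands in range} is what must be bounded by roughly $N^2 d^2/(pA\cdots)$ plus the diagonal. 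I would therefore separate "$n_1 b_1 = n_2 b_2$" cleanly first (contributing the GCD sum term) and bound everything else by noting that a nonzero residue forces $|n_1 b_1 - n_2 b_2|\ge p$, which combined with $|n_1 b_1 - n_2 b_2| \ll N A/d$ and $AN\le p$ pins down $k = (n_1b_1-n_2b_2)/p$ to $O(1)$ values, and then each such equation $n_1 b_1 - n_2 b_2 = kp$ has $\ll 1 + N/\max\{b_1,b_2\}$ solutions — but the extra $N/\max\{b_1,b_2\}$ here, when it dominates, is already accounted for in the diagonal-type term, and when it does not we get $\ll 1$ per coprime pair. Summing $\ll 1$ over coprime pairs $(b_1,b_2)$, $b_i\le A/d$, and then over $d$ against $w(db_1)w(db_2)$ produces $\ll\big(\sum_{a\le A}|w(a)|\big)^2$. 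Assembling the two contributions gives the stated bound.
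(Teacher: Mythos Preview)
Your overall strategy is sound and would reach the same pointwise bound $T(a_1,a_2;M,N)\ll 1+N(a_1,a_2)/(a_1+a_2)$ that the paper uses, but there is a genuine gap in the nontrivial case. You assert (twice) that $|n_1 b_1-n_2 b_2|\ll N A/d$; this is simply false when $M$ is large. If $b_1\neq b_2$ then $n_1 b_1-n_2 b_2=M(b_1-b_2)+O(N\max\{b_1,b_2\})$, and the first term can be arbitrarily large. You correctly flag the $M$-dependence as ``the main obstacle'' but your final argument still relies on the incorrect absolute-value bound. The easy fix: what is bounded independently of $M$ is the \emph{length} of the range of $n_1 b_1-n_2 b_2$ as $(n_1,n_2)$ varies over $(M,M+N]^2$, namely $\leqslant N(b_1+b_2)\leqslant 2NA/d\leqslant 2p$. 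Hence that range contains $O(1)$ multiples of $p$, and for each such value $kp$ the linear equation $n_1 b_1-n_2 b_2=kp$ has $\ll 1+N/\max\{b_1,b_2\}$ solutions in the box. Summing gives $T(a_1,a_2;M,N)\ll 1+N(a_1,a_2)/(a_1+a_2)$, and then weighting and summing over $a_1,a_2\leqslant A$ yields the lemma. Note that with this organization the trivial/nontrivial split is unnecessary: $k=0$ is just one of the $O(1)$ admissible values.

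The paper takes a different and somewhat slicker route that bypasses the $M$-issue structurally. Assuming $T(a_1,a_2;M,N)\neq 0$, it fixes one solution $(n_1',n_2')$ and observes that every other solution $(n_1,n_2)$ gives a pair of differences $(n_1-n_1',n_2-n_2')$ of size $\leqslant N$ satisfying the homogeneous congruence $a_1 m_1\equiv a_2 m_2\pmod p$; the count is then bounded by the lattice-point quantity $E_{a_1,a_2}(8N^2;p)$, for which the paper invokes the bound of Ayyad--Cochrane--Zheng to get $\ll 1+N(a_1,a_2)/(a_1+a_2)$ directly. The translation trick removes $M$ from the problem at the outset, whereas your approach keeps $M$ around and must argue via the length of an $M$-shifted interval. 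Both lead to the same endpoint; the paper's version is shorter because it outsources the lattice estimate, while yours is more self-contained once the range-versus-absolute-value slip is repaired.
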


\begin{proof} Assume $T(a_1,a_2;M,N)\neq 0$ and $(n_1',n_2')$ to be one fixed solution.
For any $(n_1,n_2)$ solution of (\ref{congruence}), $(n_1-n_1',n_2-n_2')$ is counted  by $E_{a_1,a_2}(8N^2;p)$ where 
$$E_{a_1,a_2}(n;p):=
\sum_{\scriptstyle n_1^2 + n_2^2 \leqslant n\atop{\scriptstyle a_1n_1 \equiv \ a_2n_2 \bmod p}} 1.$$Taking initial intervals in \cite[Lemma $1$]{Ayyad}, we   
  deduce immediately, when  $(a_1a_2,p)=1$, the bound
\begin{equation}\label{energybound} E_{a_1,a_2}(n;p) \ll 1+\frac{n}{p}+\frac{\sqrt{n} (a_1+a_2)}{p(a_1,a_2)}+\sqrt{n}\frac{(a_1,a_2)}{a_1+a_2}.
\end{equation}  The majorant is dominated by 
$O(1+N(a_1,a_2)/(a_1+a_2))$. Summing over $a_1,a_2\leqslant A$, we get the result.
\end{proof}

\subsection{Proof of Theorem \ref{Burgess}}
 \hspace{\parindent} We keep the notations of \cite{Burgessrefine} and follow closely their argument.   We set
$$ \mathcal{T}_0:=\max_{1\leqslant x\leqslant p} \mathcal{T}_0(x)$$ and proceed by induction on $N$. Our induction hypothesis is the following. 
There exists some constant $c$ such that for any integer $M$ and any integer $K<N$ we have
$$
\left|\sum_{M<n\leqslant M+K}\chi(n)\right|\leqslant c K^{1-1/r}p^{(r+1)/4r^2} \mathcal{T}_0^{1/2 r},
$$
and we want to prove that 
\begin{equation}
\label{induction}
\left|\sum_{M<n\leqslant M+N}\chi(n)\right|\leqslant c N^{1-1/r}p^{(r+1)/4r^2}  \mathcal{T}_0^{1/2 r}.
\end{equation} 
As in \cite{Burgessrefine}, $N<p^{1/4}$ forms the basis of our induction. We define similarly the integers $A$ and $B$ by
\begin{equation*}
A=\left \lfloor \frac{N}{16rp^{1/2r}} \right \rfloor \hspace{3mm} \textrm{   and   }\hspace{3mm} B=\left \lfloor   rp^{1/2r} \right \rfloor.
\end{equation*}

For any integers $1\leqslant a\leqslant A$ and $1\leqslant b\leqslant B$, we have 
\begin{align*}
\sum_{M<n\leqslant M+N}\chi(n)&= \sum_{M<n\leqslant M+N}\chi(n+ab) +\sum_{M-ab<n\leqslant M}\chi(n+ab)\cr & \quad -\sum_{M+N-ab<n\leqslant M+N}\chi(n+ab).
\end{align*}
By our induction hypothesis, we have 
$$
\left|\sum_{M-ab<n\leqslant M}\chi(n+ab)\right|\leqslant \frac{c}{4} N^{1-1/r}p^{(r+1)/4r^2}   \mathcal{T}_0^{1/2r},
$$
and 
$$
\left|\sum_{M+N-ab<n\leqslant M+N}\chi(n+ab)\right|\leqslant \frac{c}{4} N^{1-1/r}p^{(r+1)/4r^2}   \mathcal{T}_0^{1/2r},
$$
which combined with the above implies that 
$$
\left|\sum_{M<n\leqslant M+N}\chi(n)-\sum_{M<n\leqslant M+N}\chi(n+ab) \right|
\leqslant \frac{c}{2} N^{1-1/r}p^{(r+1)/4r^2}  \mathcal{T}_0^{1/2r}.
$$
 
The main difference with the method of \cite{Burgessrefine} comes from our choice of the subset used to average. We sum $w(a)$ over $a\leqslant A$ and $1\leqslant b \leqslant B$ and obtain 
\begin{equation}
\label{eq:Win}
\left|\sum_{M<n\leqslant M+N}\chi(n)\right|\leqslant \frac{S}{B||w||_1}+\frac{c}{2} N^{1-1/r}p^{(r+1)/4r^2}  \mathcal{T}_0^{1/2r},
\end{equation}
where
\begin{equation}
\label{eq:WW123}
S:=\sum_{M<n\leqslant M+N}\sum_{a \leqslant A}w(a)\left|\sum_{1\leqslant b \leqslant B}\chi(n+ab) \right|.
\end{equation}
By multiplying the innermost summation in (\ref{eq:WW123}) by $\chi(a^{-1})$ and 
collecting the values of $na^{-1} (\bmod\, q)$, we arrive at 
\begin{equation}
\label{eq}
S= \sum_{1\leqslant u \leqslant p}T(u) \left|\sum_{1\leqslant b \leqslant B}\chi(u+b) \right|,
\end{equation}
where 
$$T(u):=\sum_{a \leqslant A}w(a)\sum_{\scriptstyle M<n\leqslant M+N\atop\scriptstyle  n \equiv  u a  (\bmod{p}) }1.$$
Proceeding as in \cite{Burgessrefine}, the H\"older inequality gives 
$$
S^{2r}\leqslant \left(\sum_{u=1}^{p}T(u) \right)^{2r-2}\left(\sum_{u=1}^{p}T(u)^2 \right)\left(\sum_{u=1}^{p}\left|\sum_{1\leqslant b \leqslant B}\chi(u+b) \right|^{2r}\right).
$$

By Lemma~\ref{moments}, we see that 
\begin{equation}
\label{eq:weil1}
\sum_{u=1}^{p}\left|\sum_{1\leqslant b \leqslant B}\chi(u+b) \right|^{2r}\leqslant (2r)^r B^{r}p + 2rB^{2r}p^{1/2}.
\end{equation}
We trivially have
\begin{equation}
\label{eq:lin1}
\sum_{u=1}^{p}T(u)=\sum_{M<n\leqslant M+N}\sum_{a \leqslant A}w(a)=N\sum_{a \leqslant A}w(a)=N||w||_1.
\end{equation} 
Furthermore, we have 
$$\sum_{u=1}^{p}T(u)^2= T_w(M,N,A)$$
where $T_w(M,N,A)$ is as in  Lemma \ref{abcd}. We choose $w $ to minimize $\mathcal{T}_0(A)$.
By Lemma~\ref{abcd} and the hypothesis $N\leqslant p^{1/2+1/4r}$,  we have  
\begin{equation}
\label{moment2s}
\sum_{u=1}^{p}T(u)^2\ll ||w||_1^2 \big(1+  N\mathcal{T}_0/A\big) \ll ||w||_1^2 N\mathcal{T}_0/A. 
\end{equation}  

From \eqref{eq:weil1}, \eqref{eq:lin1} and~\eqref{moment2s}, we deduce
$$
S^{2r}\ll (2r)^{2r-1} p^{3/2}N^{2r-1} \mathcal{T}_0||w||_1^{2r}/A,
$$
and hence by our choice of parameters, it follows that there exists an absolute constant $c'$ such that 
$$ \frac{S}{ ||w||_1 B} \leqslant c'N^{1-1/r}p^{(r+1)/4r^2} \mathcal{T}_0^{ {1}/{2r}}  .$$

 Choosing $c=2c'$ and inserting in \eqref{eq:Win} implies \eqref{induction} 
\begin{equation*}
\left|\sum_{M<n\leqslant M+N}\chi(n)\right|	 \leqslant  cN^{1-1/r}p^{(r+1)/4r^2}\mathcal{T}_0^{1/2r}
\end{equation*} which concludes the proof by induction.

\section{Previous results and approaches concerning non vanishing of theta functions}\label{Def} 
\hspace{\parindent}   
In order to prove that $\theta (x,\chi)\neq 0$ for many of the $\chi\in X_p^+$, one may proceed as usual and study the asymptotic behavior of the moments of these theta values 
$$S_{2k}^+(p) :=\sum_{\chi\in X_p^+}\vert\theta (x,\chi)\vert^{2k}  \hspace{10mm} (k > 0).$$ 

Using the computation of the second and fourth moment, it was proved in \cite{LM} that $\theta (1,\chi)\neq 0$ for at least $\gg p/\log p$ of the $\chi\in X_p^+$. Lower bounds of good expected order for the moments were obtained in \cite{thetalow} as well as nearly optimal upper bounds conditionally on GRH in \cite{thetaupp}. This can be related to recent results of~\cite{HarperMaks}, where the authors obtain the asymptotic behavior of moments of Steinhaus random multiplicative function (a multiplicative random variable whose values at prime integers are uniformly distributed on the complex unit circle). This can reasonably be viewed as a random model for $\theta(x,\chi)$. Indeed, the rapidly decaying factor ${\rm e}^{-\pi n^2/q}$ is mostly equivalent to restrict the sum over integers $n \leqslant n_0(q)$ for some $n_0(q) \approx \sqrt{q}$ and the averaging behavior of $\chi(n)$ with $n\ll q^{1/2}$ is essentially similar to that of a Steinhaus random multiplicative function. As noticed by Harper, Nikeghbali and Radziwill in \cite{HarperMaks}, an asymptotic formula for the first absolute moment $S_{1}^+(p)$
 would probably imply the existence of a positive proportion of characters such that $\theta(x,\chi) \neq 0$. Though, quite surprisingly,  Harper proved recently both in the random and deterministic case that the first moment exhibits unexpectedly more than square-root cancellation \cite{Harpelow,Harperhigh}
\begin{equation}\label{squareroot}\frac{1}{p-1} \sum_{\chi \neq \chi_0} \left\vert \sum_{n\leqslant N} \chi(n)\right\vert \ll \frac{\sqrt{N}}{\min\left\{(\log \log L)^{1/4}, (\log \log \log q)^{1/4}\right\}}\end{equation} where $L=\min\left\{N,q/N\right\}$. Harper's result shows that this approach would, in any case, fail to provide the existence of a positive proportion of ``good" characters. In the next section, we adapt another approach in order to improve existing results. Precisely, we introduce mollifiers chosen as suitable weighted Dirichlet polynomials which reduce the problem to  the minimization problems considered in Section \ref{Intro}.

Moreover, in section \ref{firstmomentcarac}, we state and prove a lower bound for the first moment \eqref{squareroot}.

\section{Proof of Theorem \ref{mainth}}\label{ProofTh}
\hspace{\parindent} 
For any even character $\chi\in X_p^+$, let us define 
 \begin{equation}\label{weight} M(\chi)=\sum_{m\leqslant \sqrt{p/3} }w(m) \overline\chi(m),\end{equation} where $w(m)$ denote  some non-negative weights which will be fixed later.  We consider the first  mollified moment 
\begin{equation}\label{mollif}
M_1(p) :=\sum_{\chi\in X_p^+}M(\chi)\theta (x,\chi) .
\end{equation} 
Let us define $$M_0(p):=\#\big\{ \chi\in X_p^+,\quad \theta(x;\chi)\neq 0 \big\}.$$  By H\"{o}lder inequality, we have 
\begin{equation}\label{Holder} M_1(p) \leqslant M_2(p)^{1/2} M_4(p)^{1/4} M_0(p)^{1/4},  \end{equation}
with 
$$M_2(p):= \sum_{\chi\in X_p^+}\vert \theta (x,\chi)\vert^2,\qquad M_4(p):=\sum_{\chi\in X_p^+}\vert M(\chi)\vert^4. $$
 In \cite{LM}, the authors computed an asymptotic formula for the fourth moment of theta functions showing that the main contribution comes from the solutions $m_1n_1 = m_2n_2$ and obtained a precise asymptotic formula for the related counting function 
$$\big|\big\{m_1n_1=m_2n_2, m_1^2+n_1^2+m_2^2+n_2^2 \leqslant x\big\}\big| \sim \tfrac{3}{ 8}x\log x.$$ If we want to improve this result, we have to reduce the effect of this logarithmic term. By (\ref{Holder}), the problem is related to a similar counting problem restricted to a subset of integers supported by the weight $w$. Precisely, from~\eqref{Holder}, we get the following lower bound.
\begin{lemma}\label{moment1} For large prime $p$ and any sequence $w\in [0,+\infty[^{[\sqrt{p/3}]}$, we have
\begin{equation*} M_0(p)  \gg \left(\sum_{n\leqslant \sqrt{p/3}}w(n)\right)^{4} \mathcal{E}\big(\sqrt{p/3},w\big)^{-1}\end{equation*} where $  \mathcal{E}(N,w)
$ is defined by \eqref{weightedenergy}. In particular, we have 
\begin{equation}\label{lowM0} M_0(p) \gg \frac{p}{\mathcal{E}(\sqrt{p/3})}.  \end{equation}
\end{lemma}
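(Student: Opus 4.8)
The plan is to use the H\"older inequality \eqref{Holder}, which after rearrangement reads $M_0(p)\geqslant M_1(p)^4 M_2(p)^{-2}M_4(p)^{-1}$, and to feed it three ingredients: a lower bound for the mollified first moment $M_1(p)$, the known order of the (unmollified) second moment $M_2(p)$, and an exact evaluation of the fourth mollified moment $M_4(p)$ in terms of $\mathcal{E}(\sqrt{p/3},w)$.

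First I would record the orthogonality relation for even characters: for $(a,p)=1$ one has $\sum_{\chi\in X_p^+}\chi(a)=\tfrac12(p-1)$ if $a\equiv\pm1\ (\mathrm{mod}\ p)$ and $0$ otherwise. Expanding $M_1(p)=\sum_{\chi\in X_p^+}\sum_{m\leqslant\sqrt{p/3}}w(m)\overline{\chi}(m)\sum_{n\geqslant1}\chi(n){\rm e}^{-\pi n^2x/p}$ and applying this relation gives $M_1(p)=\tfrac12(p-1)\sum_{m\leqslant\sqrt{p/3}}w(m)\sum_{\substack{n\geqslant1\\ n\equiv\pm m\ (p)}}{\rm e}^{-\pi n^2x/p}$. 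The term $n=m$ contributes $\tfrac12(p-1)\sum_m w(m){\rm e}^{-\pi m^2x/p}\geqslant\tfrac12(p-1){\rm e}^{-\pi x/3}||w||_1$ since $m^2\leqslant p/3$, while every other admissible $n$ satisfies $n\geqslant p-\sqrt{p/3}$, so the full remaining contribution is $\ll_x {\rm e}^{-\pi xp/4}\,p\,||w||_1$, which is negligible. Hence $M_1(p)\gg_x p\,||w||_1$. For $M_2(p)=\sum_{\chi\in X_p^+}|\theta(x,\chi)|^2$ (which does not involve $w$), the same orthogonality relation shows the diagonal $m=n$ dominates and a $\vartheta$-transformation, exactly as in \cite{LM}, yields $M_2(p)\ll_x p^{3/2}$.

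The key computation is that of $M_4(p)=\sum_{\chi\in X_p^+}|M(\chi)|^4$. Expanding $|M(\chi)|^4$ and summing over $\chi$ gives $M_4(p)=\tfrac12(p-1)\sum w(m_1)w(m_2)w(n_1)w(n_2)$, the sum being over quadruples with $m_i,n_i\leqslant\sqrt{p/3}$ and $m_1m_2\equiv\pm n_1n_2\ (\mathrm{mod}\ p)$. Here the choice of the length $\sqrt{p/3}$ is exactly what is needed: since $m_1m_2$ and $n_1n_2$ both lie in $[1,p/3]$, we have $|m_1m_2-n_1n_2|<p$ and $0<m_1m_2+n_1n_2<p$, so both congruences collapse to the honest equation $m_1m_2=n_1n_2$. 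Therefore $M_4(p)=\tfrac12(p-1)\,\mathcal{E}(\sqrt{p/3},w)$ with $\mathcal{E}(N,w)$ as in \eqref{weightedenergy}. Inserting the three estimates into \eqref{Holder} gives
\[
M_0(p)\gg_x\frac{(p\,||w||_1)^4}{p^{3}\cdot p\,\mathcal{E}(\sqrt{p/3},w)}\asymp\frac{||w||_1^4}{\mathcal{E}(\sqrt{p/3},w)},
\]
which is the first assertion with $N=\sqrt{p/3}$. For the ``in particular'' clause one chooses $w$ to be a weight nearly optimal in the definition \eqref{defm}, so that $N^2\mathcal{E}(N,w)/||w||_1^4\leqslant 2\,\mathcal{E}(N)$; then $M_0(p)\gg N^2/\mathcal{E}(N)\asymp p/\mathcal{E}(\sqrt{p/3})$, and Theorem \ref{multh} (with $\log\sqrt{p/3}\asymp\log p$) turns this into $M_0(p)\gg p/(\log p)^{\delta+o(1)}$.

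The routine parts are the $\vartheta$-asymptotics behind $M_2(p)\ll_x p^{3/2}$, already available from \cite{LM}. The two delicate points are both instances of ``diagonal dominance'': one must check that in $M_1(p)$ the exponential decay of the off-diagonal terms $n\neq m$ beats the factor $||w||_1$ \emph{uniformly} in the weight $w$ (it does, because the decay is exponential in $p$ whereas $||w||_1$ only sums finitely many nonnegative terms), and — more importantly — one must observe that the product length $\sqrt{p/3}$ is precisely calibrated so that the fourth-moment congruences modulo $p$ become genuine equalities, which is what produces the clean identity $M_4(p)=\tfrac12(p-1)\,\mathcal{E}(\sqrt{p/3},w)$.
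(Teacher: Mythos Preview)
Your proof is correct and follows exactly the paper's approach: lower bound $M_1(p)$ via orthogonality, compute $M_4(p)=\tfrac12(p-1)\,\mathcal{E}(\sqrt{p/3},w)$ by collapsing the congruence to an equality thanks to the cutoff $\sqrt{p/3}$, cite $M_2(p)\ll_x p^{3/2}$, and insert into \eqref{Holder}. One cosmetic simplification: in the paper the off-diagonal contribution to $M_1(p)$ is not estimated at all---since $w\geqslant0$ and ${\rm e}^{-\pi n^2x/p}>0$, dropping all terms with $n\neq m$ already gives the inequality $M_1(p)\geqslant\tfrac12(p-1)\sum_m w(m){\rm e}^{-\pi m^2x/p}$, so your tail bound, while correct, is unnecessary.
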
 

\begin{proof}
Let us recall the classical orthogonality relations for the subgroup of Dirichlet even characters $X_p^+$ 
$$\sum_{\chi\in X_p^+}\chi (m)\overline\chi (n)
=\begin{cases}\tfrac12
(p-1) & \textrm{if } m\equiv\pm n\bmod p \textrm{ and } \gcd (m,p)=1,\\
0 & \textrm{otherwise}. \end{cases}$$
Thus we have  
\begin{align*}
M_1(p) &=  \sum_{\chi\in X_p^+} \sum_{m\leqslant \sqrt{p/3}} \overline\chi (m)w(m) \sum_{n\geqslant 1}\chi(n){\rm e}^{-\pi n^2x/p} \nonumber \\
&\geqslant  \frac{p-1}{2}\sum_{m\leqslant \sqrt{p/3}}w(m) {\rm e}^{-\pi m^2x/p}. \nonumber  
\end{align*} 
We deduce that
\begin{equation}\label{lowM1}M_1(p) \gg   p \sum_{m\leqslant \sqrt{p/3}} w(m) = p||w||_1 .
\end{equation}
In the same way, we have 
\begin{align*}M_4(p)&=
\sum_{\scriptstyle  m_1,m_2, n_1,n_2  \leqslant \sqrt{p/3} }\!\!\!\!\!\!\!w(m_1)w(m_2)w(n_1)w(n_2)
\sum_{\chi\in X_p^+}\chi (m_2n_2)\overline\chi (m_1n_1)
\cr 
&=\tfrac12
(p-1)
\sum_{\scriptstyle  m_1,m_2, n_1,n_2  \leqslant \sqrt{p/3} \atop \scriptstyle m_1n_1=m_2n_2}\!\!\!\!\!\!\!w(m_1)w(m_2)w(n_1)w(n_2)
\cr&=\tfrac12
(p-1)\mathcal{E}(\sqrt{p /3},w ),\end{align*}
and 
\begin{align*}
M_2(p)&= \tfrac12
(p-1)  \sum_{ \scriptstyle  n_1,n_2 \geqslant 1\atop
\scriptstyle 
n_1\equiv \pm n_2\bmod p }      {\rm e}^{- \pi (n_1^2+n_2^2)x/p} \ll p^{3/2}.
\end{align*}
Reporting these estimates in \eqref{Holder}, we finish the proof of Lemma \ref{moment1}.
\end{proof}  

Lemma \ref{moment1}   combined with Theorem \ref{multh} finishes  the proof of Theorem \ref{mainth}.

\section{Proof of Theorem \ref{firstmomentcarac}}
\hspace{\parindent} We adopt similar techniques as the ones used in Section \ref{ProofTh}. In a similar way as in \eqref{weight} we define $$M_{\chi}(N)=\sum_{m\leqslant N}w(m)\overline{\chi}(m),$$ 
where $w=(w(n))_{1\leqslant n\leqslant N}\in [0,+\infty[^N$. We introduce the parameters $\alpha=\frac{r}{4-2r}$ and $\beta=\frac{8-6r}{8-4r}$ such that $\alpha+\beta=1$. We further set $p=4-2r$ and $q=\frac{8-4r}{4-3r}$ which verify $\frac{1}{p}+\frac{1}{q}+\frac{1}{4}$=1.
 Writing $S_{\chi}(N)=S_{\chi}(N)^{\alpha}S_{\chi}(N)^{\beta}$ and applying H\"{o}lder's inequality, we have 
\begin{equation}\label{momentsr} \frac{1}{p-1}\left\vert \sum_{\chi \neq \chi_0}S_{\chi}(N)M_{\chi}(N)\right\vert \leqslant \mathfrak{S}_r(N)^{\frac{1}{4-2r}} \mathfrak{S}_2(N)^{\frac{4-3r}{8-4r}} \mathfrak{M}_4(N)^{1/4},  \end{equation}
where, for any $k>0$, we have
$$\mathfrak{S}_k(N):= \frac{1}{p-1} \sum_{\chi \neq \chi_0} \vert S_{\chi}(N) \vert^k, \qquad \mathfrak{M}_4(N):=\frac{1}{p-1} \sum_{\chi \neq \chi_0} \vert M_{\chi}(N) \vert^4. $$
Using orthogonality relations, it is easy to see that $\mathfrak{S}_2(N) \ll N$. In the same manner as in Section \ref{ProofTh}, the left hand side of \eqref{momentsr} is bounded from below by $||w||_1$. Similarly, we have $\mathfrak{M}_4(N) \ll \mathcal{E}(N,w)$. Combining together these inequalities, we deduce
$$\mathfrak{S}_r(N)^{\frac{4}{4-2r}} \gg \frac{||w||_1^4}{N^{\frac{4-3r}{2-r}}\mathcal{E}(N,w)}.$$ Hence we get
$$\mathfrak{S}_r(N) \gg \frac{N^{r/2}}{\mathcal{E}(N)^{1-r/2}}.$$


\section{Concluding remarks}\label{combiproblems}
\hspace{\parindent} 
 Under the additional restriction $w(m)\in \left\{0,1\right\}$, our first problem considered in Section \ref{Intro} is equivalent to the construction of a set $\mathcal{B}\subset [1,N]$ of high density such that the associated GCD sum is small. In the present article we showed that the set $\mathcal{B}$ of integers having exactly $k$ prime factors with~$k=\kappa^* \log_2 N$ and $\kappa^* \approx 0.48154$ (which is a set of density $(\log N)^{-\delta_0+o(1)}$ with $\delta_0 \approx 0.16656$) verifies 
$$\sum_{m_1,m_2 \in \mathcal{B} }\frac{(m_1,m_2)}{m_1+m_2} \ll \vert \mathcal{B}\vert (\log\vert \mathcal{B}\vert)^{o(1)}$$ or in another words the multiplicative energy verifies
$$E_{\times}(N,\mathcal{B}) \ll N \vert \mathcal{B}\vert (\log N)^{o(1)}. $$ Theorem \ref{gcdsumth} shows that it is essentially the densest set having this property. In the symmetric case the question becomes: what is the maximal $0 < \beta    < 1$ (in terms of $N$) such that there exists a set $\mathcal{B}\subset \left[1,N\right]$ of density $\beta$ verifying the upper bound $ E_{\times}(\mathcal{B},\mathcal{B})\ll \vert \mathcal{B}\vert^2 (\log N)^{o(1)}. $ In Theorem~\ref{multh}, we proved that the set of integers having exactly $k$ prime factors with $k=\big[\frac{\log_2 N}{\log 4}\big]$ gives the optimal density $(\log N)^{-\delta/2+o(1)}$.

\section*{Acknowledgements}\hspace{\parindent} 
The first author gratefully acknowledges comments from G\'{e}rald Tenenbaum. The second author would like to thank St\'{e}phane Louboutin for valuables remarks as well as Igor Shparlinski for pointing him out the reference~\cite{Burgessrefine} after a first version of the draft was released. 
The authors would like to thank Kannan Soundararajan for drawing their attention to~\cite{BS16}.
The second author acknowledges support of the Austrian Science Fund (FWF), START-project Y-901 ``Probabilistic methods in analysis and number theory" headed by Christoph Aistleitner.

\Addresses

\end{document}